\definecolor{darkgreen}{RGB}{0,150,0}
\definecolor{darkred}{RGB}{200,0,0}
\definecolor{darkblue}{RGB}{0,0,220}
\definecolor{gold}{RGB}{255,218,9}
\newtheorem{theorem}{Theorem}[section]
\newtheorem{lemma}[theorem]{Lemma}
\newtheorem{corollary}[theorem]{Corollary}
\theoremstyle{definition}
\newtheorem{question}[theorem]{Question}
\theoremstyle{remark}
\newtheorem{remark}[theorem]{Remark}
\numberwithin{equation}{section}
\newcommand{\C}{\mathbb{C}}
\newcommand{\ve}{\varepsilon}
 \def\l@subsection{\@tocline{2}{0pt}{2pc}{6pc}{}} \makeatother
\begin{document}

\title{Lagrangian slice disks with symplectomorphic exteriors}

\author{Joseph Breen}
\address{University of Alabama, Tuscaloosa, AL 35401}
\email{jjbreen@ua.edu} \urladdr{https://sites.google.com/view/joseph-breen}

\thanks{JB was partially supported by an AMS-Simons Travel Grant.}

\begin{abstract}
By modifying a construction of Abe and Tange, we exhibit arbitrarily large families of Lagrangian slice disks with Weinstein deformation equivalent exteriors. This answers a Lagrangian version of a question of Hitt and Sumners. We raise other open questions related to Lagrangian slice disks and their exteriors. 
\end{abstract}

\maketitle

\tableofcontents

\section{Introduction}

Let $D\subset B^4$ be a slice disk and $X_D:= B^4 \setminus \nu(D)$ its exterior. In 1981, Hitt and Sumners \cite{hitt1981many} studied the \emph{indeterminacy index} $\zeta(D)$, defined as the number of non-isotopic slice disks in $B^4$ whose exteriors are diffeomorphic to $X_D$. (Here, isotopy is not relative to the boundary.) In the context of a growing body of literature on the knot complement problem \cite{gluck1961embedding,lashof1969codimensiontwo,cappell1976complement,gordon1976knots}, their work demonstrated that the disk complement problem is more complicated. After establishing bounds in various settings, they asked if $\zeta$ could be arbitrarily large:

\begin{question}\cite[Question 1]{hitt1981many}\label{question:HS81}
Fix $N\geq 1$. Does there exist $D\subset B^4$ with $\zeta(D) \geq N$?     
\end{question}

\noindent The work of Hitt and Sumners and its immediate progeny \cite{plotnick1983infinitely,suciu1985infinitely} primarily concerned codimension-$2$ knotting in higher dimensions. There remained little progress on the (as stated) low-dimensional \cref{question:HS81} for many years. 

A stronger question \cite[Question 2]{hitt1981many} asking whether there exists a slice disk with $\zeta(D) = \infty$ was eventually answered affirmatively by Abe and Tange \cite{abe2022ribbon}, who found an infinite family of non-isotopic ribbon disks, distinguished by the knot types of their boundaries, with diffeomorphic exteriors. Later, Meier and Zupan \cite{meier2023knots}, using a prior construction in \cite{meier2022generalized}, strengthened this result by exhibiting infinitely many knots such that each bounds an infinite family of non-isotopic ribbon disks. 

There is a rich tradition of importing low-dimensional surgery-theoretic questions of interest to the Legendrian setting, where they typically remain interesting. For example, the knot complement problem \cite{gordon1989complement} births its Legendrian counterpart \cite{kegel2018legendrian}, while surgery numbers \cite{auckly1993surgery}, characterizing slopes \cite{kronheimer2007monopoles,lackenby2019every,piccirillo2019snake}, and cosmetic surgery \cite{gordon1991dehn} each have burgeoning contact interpretations in \cite{ding2009handle,etnyre2023contactsurgerynumbers}, \cite{casals2024steintrace,kegel2025share}, and \cite{etnyre2025cosmetic}. Given this, it is natural to cast Hitt and Sumner's disk exterior problem in an appropriate symplectic setting.

\subsection{Main results} Endow $B^4$ with its standard symplectic structure $\omega_{\mathrm{st}}$. A Lagrangian slice disk $D\subset W^4$ with Legendrian boundary is \emph{regular} \cite{eliashberg2018flexiblelagrangians} if there is a Weinstein homotopy $(\lambda_t, \phi_t)$, $0\leq t\leq 1$, such that $(\lambda_0,\phi_0)$ is the standard radial structure on $B^4$, $D$ is Lagrangian with respect to $d\lambda_t$ for all $t$, and $D$ is a co-core of the Weinstein structure $(\lambda_1,\phi_1)$. In particular, if $D$ is regular then its exterior naturally inherits the structure of a Weinstein, hence Stein, domain. Note that \emph{decomposable} disks are regular \cite{conway2021symplectic}, but we do not know if the converse holds. 

Call the \emph{Weinstein indeterminacy index}, denoted $\zeta_{\omega}(D)$, the number of distinct (up to Hamilto\-nian iso\-topy, not relative boundary) regular Lagrangian slice disks whose exteriors are Weinstein deformation equivalent to $X_D$.\footnote{Weinstein deformation equivalence of domains implies exact symplectormorphism of their (unique) completions. For this reason, we will informally call the exteriors symplectomorphic.} Here we propose a Lagrangian version of \cref{question:HS81}.

\begin{question}\label{question:main}
Fix $N\geq 1$. Does there exist a regular Lagrangian slice disk $D\subset (B^4, \omega_{\mathrm{st}})$ with $\zeta_{\omega}(D) \geq N$?
\end{question}

We emphasize that \cref{question:main} is not relative to the boundary. Prior work in the Lagrangian setting has focused on distinction up to Hamiltonian isotopy relative boundary for a fixed knot type; for instance, see the slice disks of Li and Tange \cite{li2021smoothly}, the discussion after \cref{question:same-knot}, and more generally the study of exact Lagrangian fillings \cite{ekholm2012exactcobordisms,casals2022infinitely}. Typically, such surfaces become (Lagrangian) isotopic when considered not relative to the boundary. 

Our main result involves modifying the construction of Abe and Tange in \cite{abe2022ribbon} and upgrading it to the Weinstein setting to answer \cref{question:main} affirmatively. 

\begin{theorem}\label{thm:main}
Fix $N\geq 1$. The Legendrian knots $\Lambda_{n,N}$, $1\leq n \leq 2N$, in \cref{fig:main} satisfy: 
\begin{enumerate}
    \item Each $\Lambda_{n,N}$ bounds a regular (in fact, decomposable) Lagrangian slice disk $D_{n,N}$ such that the exteriors $X_{D_{n,N}}$, $1\leq n \leq 2N$, are all Weinstein deformation equivalent.\label{part:main-construction}
    \item For $N$ sufficiently large, the knots $\Lambda_{1,N}, \dots, \Lambda_{N,N}$ are pairwise smoothly non-isotopic. In particular, the disks $D_{1,N}, \dots, D_{N,N}$ are pairwise smoothly non-isotopic.\label{part:main-obstruction}

\end{enumerate}
Consequently, $\zeta_{\omega}(D_{1,N})\geq N$. 
\end{theorem}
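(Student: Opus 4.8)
The plan is to prove the two parts separately, since they require entirely different machinery: part \eqref{part:main-construction} is a construction in Weinstein/contact topology, while part \eqref{part:main-obstruction} is a smooth knot-theoretic distinction. Once both are in hand, the final inequality $\zeta_\omega(D_{1,N}) \geq N$ is immediate: by \eqref{part:main-construction} the $N$ exteriors $X_{D_{1,N}}, \dots, X_{D_{N,N}}$ are all Weinstein deformation equivalent to $X_{D_{1,N}}$, and by \eqref{part:main-obstruction} the disks $D_{1,N}, \dots, D_{N,N}$ are pairwise smoothly non-isotopic, hence certainly pairwise non-isotopic up to Hamiltonian isotopy, so they contribute $N$ distinct regular Lagrangian slice disks to the count defining $\zeta_\omega(D_{1,N})$.

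For part \eqref{part:main-construction}, I would start from the Abe--Tange family of ribbon disks and reinterpret their construction Legendrian-ly. The idea is to exhibit each $\Lambda_{n,N}$ together with an explicit \emph{decomposable} Lagrangian filling $D_{n,N}$ built from elementary cobordism moves (minima and saddles), which automatically guarantees regularity via \cite{conway2021symplectic}. The crucial point is that Weinstein deformation equivalence of the exteriors should follow from a \emph{common} Weinstein handle decomposition: I would aim to show that for each $n$ the exterior $X_{D_{n,N}}$ admits a handle description that is carried to that of $X_{D_{1,N}}$ by a sequence of Weinstein homotopies and handle slides. Concretely, the Abe--Tange phenomenon rests on the ambient handle structure being independent of the parameter $n$ (only the attaching data for the disk varies in a way invisible to the complement); I would upgrade each smooth handle move in their argument to a Weinstein handle move, invoking that handleslides and cancelling handle pairs preserve Weinstein deformation equivalence. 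The output is a single Weinstein domain, independent of $n$, presented as all $2N$ exteriors.

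For part \eqref{part:main-obstruction}, the task is purely smooth: distinguish $\Lambda_{1,N}, \dots, \Lambda_{N,N}$ as knots. Since these are built from the Abe--Tange family, the natural distinguishing invariant is whatever separates the underlying smooth knot types there, distinguished by their boundary knot types as in the original \cite{abe2022ribbon}. I would compute a classical invariant (for instance the Alexander polynomial, the knot group, or a knot Floer datum) across the family and show that, for $N$ large, the first $N$ of the $2N$ knots take pairwise distinct values; the role of ``$N$ sufficiently large'' is to ensure that the invariant, which presumably grows or varies with the construction parameters, actually separates that many knots. Since non-isotopic boundary knots force non-isotopic disks (an isotopy of disks restricts to an isotopy of boundaries), this immediately yields the statement about the $D_{n,N}$.

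The main obstacle I expect is in part \eqref{part:main-construction}: upgrading the Abe--Tange smooth handle calculus to the Weinstein setting while keeping the disk Lagrangian throughout and keeping rigorous track of the parallel transport/framing data. The smooth argument manipulates handles freely, but in the Weinstein category each handle slide or cancellation must respect the contact framing and the Lagrangian condition on the co-core, so the delicate work is to certify that every move in the common decomposition is realizable as a Weinstein homotopy rather than merely a diffeomorphism. Verifying that the co-core survives as a Lagrangian disk (so regularity is genuine, not just smooth sliceness) is where I would spend the most care, likely by exhibiting the explicit decomposable filling front-by-front so that $d\lambda_t$-Lagrangianity holds by construction for all $t$.
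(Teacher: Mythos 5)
Your top-level skeleton matches the paper's (build decomposable fillings whose regularity comes from \cite{conway2021symplectic}, relate the exteriors by Weinstein handle moves, distinguish the boundary knot types smoothly, then deduce $\zeta_\omega(D_{1,N})\geq N$ exactly as you do), but the core of your plan for part \eqref{part:main-construction} is a route the paper explicitly proves cannot work. You propose to ``start from the Abe--Tange family of ribbon disks'' and upgrade their smooth handle calculus move-by-move to the Weinstein category. \cref{remarK:comparison} rules this out definitively: the Abe--Tange seed knot $4_1\,\#\,\overline{4_1}$ is not Lagrangian slice (its maximal Thurston--Bennequin invariant is less than $-1$, whereas Chantraine forces $\mathrm{TB}=-1$ for Lagrangian slice knots), and their sequential annulus-twist construction produces exterior handle decompositions with $2$-handles of unboundedly large framing, violating the framing restriction (framing $\leq \mathrm{tb}-1$) for Stein/Weinstein handlebodies. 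So the obstacle you flag at the end --- certifying each smooth move as a Weinstein homotopy --- is not a technical verification to be carried out on the Abe--Tange moves; it is an impossibility, and the missing idea is a genuinely new mechanism. In the paper this is \cref{lemma:main}: after contact-$(+1)$ surgery on a meridional max-tb unknot, a double $\pm$-stabilization can be transferred from one strand to another by handleslides. The knots are then built inside a \emph{fixed} Weinstein diagram (\cref{fig:main-handle-diagram}) with two $2$-handles $\Lambda_R,\Lambda_B$ carrying $2N-n$ and $n$ double stabilizations respectively, so that the total count $2N$ is constant and only \emph{redistributed} between the two attaching spheres as $n$ varies --- framings never grow, and repeated application of the lemma gives the Weinstein equivalences of the $2N$ exteriors. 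Nothing in your proposal supplies this redistribution mechanism or any substitute compatible with the framing bound.

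Part \eqref{part:main-obstruction} is also underdetermined in your write-up: ``compute a classical invariant (for instance the Alexander polynomial, the knot group, or a knot Floer datum)'' is a hope, not an argument, and the paper notes that even for the genuine Abe--Tange family the distinction required an involved computation of the $0^{\mathrm{th}}$ coefficient polynomial of the HOMFLYPT polynomial \cite{takioka2019classification}. The paper's actual argument is concrete and different: each $K_{n,N}$ is exhibited via Rolfsen twists as Dehn filling of a single hyperbolic $3$-component link $K\cup R\cup B$ (hyperbolicity certified by SnapPy) with slopes $\tfrac{1}{n+1}$ and $\tfrac{1}{2N-n+2}$; Thurston's Dehn filling theorem gives hyperbolicity for $N$ large, and the Neumann--Zagier volume asymptotics combined with an elementary monotonicity lemma for $f(x)=\tfrac{1}{1+x^2}+\tfrac{1}{1+(C-x)^2}$ show the volumes of $K_{1,N},\dots,K_{N,N}$ are pairwise distinct. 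This also pins down the precise role of ``$N$ sufficiently large'' (hyperbolicity of the fillings plus domination of the $\mathcal{O}(|(\vec p,\vec q)|^{-4})$ error term), which your proposal only guesses at. Your closing deductions --- non-isotopic boundaries force non-isotopic disks, and the count $\zeta_\omega(D_{1,N})\geq N$ --- are correct and agree with the paper.
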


\begin{figure}[ht]
	\centering
    \begin{overpic}[width=0.85\textwidth]{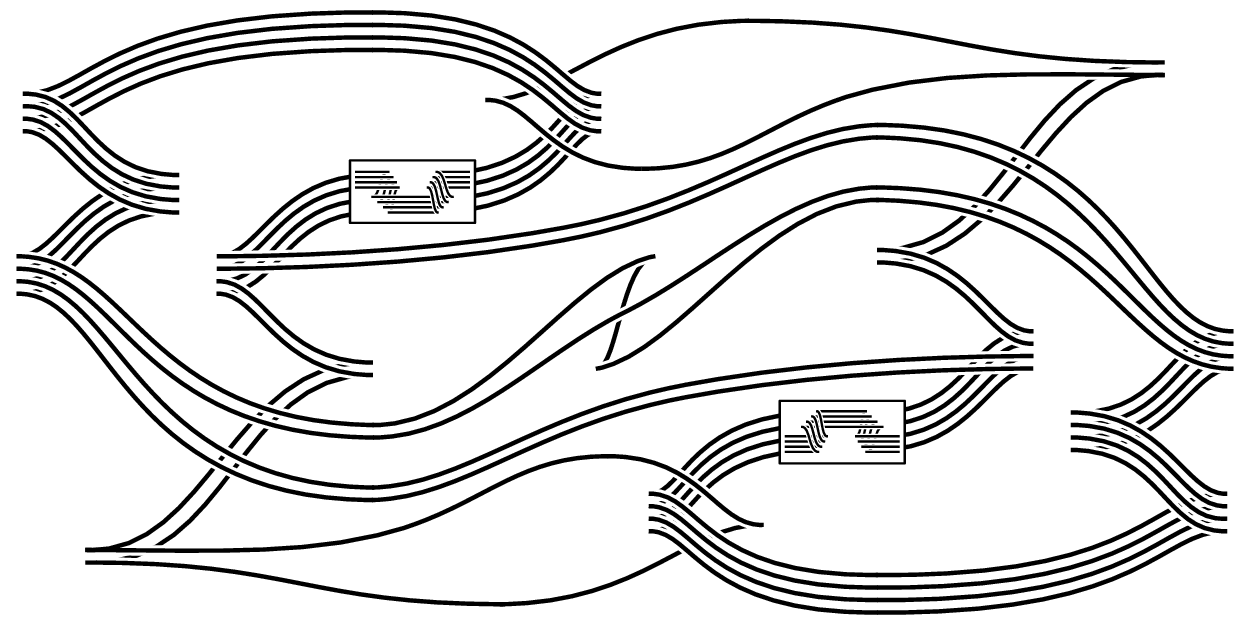}
       \put(2,13){\small $\Lambda_{n,N}$}
       \put(62.5,10.5){\footnotesize $n$ copies}
       \put(25.5,38.25){\footnotesize $2N-n$ copies}
	\end{overpic}
	\caption{The knots $\Lambda_{n,N}$ in the statement of \cref{thm:main}.}
	\label{fig:main}
\end{figure}

Excising a neighborhood of a Lagrangian slice disk $D$ from $(B^4, \omega_{\mathrm{st}})$ induces contact-$(+1)$ surgery on the boundary. Etnyre \cite{etnyre2008contact} constructed the first pairs of distinct Legendrian knots sharing a contact-$(+1)$ surgery, and other interesting pairs were recently found by Kegel and Piccirillo \cite{kegel2025share}. Casals, Etnyre, and Kegel \cite[Corollary 4.6]{casals2024steintrace} produced an infinite family of distinct Legendrian knots sharing contact-$(+1)$ surgeries. Their surgeries are overtwisted. In contrast, an immediate corollary of \cref{thm:main} is the existence of arbitrarily large collections of distinct Legendrian knots sharing tight contact-$(+1)$ surgeries.

\begin{corollary}
For any $N\geq 1$, there is a collection of $N$ pairwise-distinct Legendrian knots sharing a common Stein fillable (hence tight) contact-$(+1)$ surgery. 
\end{corollary}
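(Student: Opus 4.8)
The plan is to obtain the corollary by translating \cref{thm:main} through the dictionary between Lagrangian slice disk exteriors and contact surgery recorded just above its statement. I would fix $N$ large enough that the second part of \cref{thm:main} applies, and take the Legendrian knots $\Lambda_{1,N}, \dots, \Lambda_{N,N}$ together with their regular Lagrangian slice disks $D_{1,N}, \dots, D_{N,N}$.

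As observed preceding the corollary, excising a neighborhood of $D_{n,N}$ from $(B^4, \omega_{\mathrm{st}})$ induces contact-$(+1)$ surgery on the boundary; concretely, the contact boundary $\partial X_{D_{n,N}}$ is the contact manifold obtained from $(S^3, \xi_{\mathrm{st}})$ by contact-$(+1)$ surgery along $\Lambda_{n,N}$. By the first part of \cref{thm:main}, the exteriors $X_{D_{1,N}}, \dots, X_{D_{N,N}}$ are pairwise Weinstein deformation equivalent. Since a Weinstein deformation equivalence of domains restricts to a contactomorphism of their contact boundaries, there is a single contact manifold $(Y, \xi)$ to which every $\partial X_{D_{n,N}}$ is contactomorphic. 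Thus the $N$ knots $\Lambda_{1,N}, \dots, \Lambda_{N,N}$ all share the common contact-$(+1)$ surgery $(Y, \xi)$.

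It then remains to check the two adjectives. First, the knots are pairwise distinct as Legendrian knots: by the second part of \cref{thm:main} they are pairwise smoothly non-isotopic, and a Legendrian isotopy is in particular a smooth isotopy, so they are a fortiori pairwise Legendrian non-isotopic. Second, since each exterior $X_{D_{n,N}}$ is a Weinstein, hence Stein, domain, its boundary $(Y, \xi)$ is Stein fillable, and therefore tight by Eliashberg--Gromov.

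Since \cref{thm:main} supplies both the shared exterior and the obstruction distinguishing the knots, I expect there to be no real obstacle here: the corollary is a repackaging whose only content is the passage from Weinstein deformation equivalence of the exteriors to a contactomorphism of their boundaries, together with the elementary observation that smooth non-isotopy implies Legendrian non-isotopy.
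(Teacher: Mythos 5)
Your proposal is correct and matches the paper's intended argument: the paper states this corollary as immediate from \cref{thm:main}, relying on exactly the observations you spell out (disk exterior boundaries are contact-$(+1)$ surgeries, Weinstein deformation equivalence restricts to a boundary contactomorphism, and Weinstein domains are Stein fillings, hence their boundaries are tight). The only cosmetic point is that for a given $N$ one should take the parameter in \cref{thm:main} sufficiently large (say $M \gg N$) and use the subcollection $\Lambda_{1,M},\dots,\Lambda_{N,M}$, which your phrasing clearly intends.
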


\begin{remark}[Comparing with \cite{abe2022ribbon,meier2023knots}]\label{remarK:comparison}
Our construction is distinct from \cite{abe2022ribbon,meier2023knots}. In fact, the families considered in the cited works definitively cannot be used to answer \cref{question:main} or its implied infinite version (see \cref{question:infinity}). For one, the seed knot in the family considered by Abe and Tange in \cite{abe2022ribbon} is $4_1 \, \# \, \overline{4_1}$, which is not Lagrangian slice \cite{cornwell2016obstructions}. Moreover, their sequential construction produces handlebody decompositions of the exteriors using $2$-handles with increasingly large framing. Given the framing restrictions on Stein handlebodies \cite{gompf1998handlebody}, it is not possible for all of these to admit Stein structures.

Likewise, none of the generalized square knots $Q_{p,q}:=T_{p,q} \, \#\, \overline{T_{p,q}}$ for $p > q > 1$ considered by Meier and Zupan in \cite{meier2022generalized,meier2023knots} are Lagrangian slice. Indeed, combining the connected sum formula for the maximum Thurston-Bennequin invariant $\mathrm{TB}$ due to \cite{etnyre2003connected,torisu2003additivity} and the torus link classification of \cite{etnyre2001knotscontact},
\begin{align*}
    \mathrm{TB}(T_{p,q} \, \#\, \overline{T_{p,q}}) &= \mathrm{TB}(T_{p,q}) + \mathrm{TB}(T_{-p,q}) + 1 \\
    &= (pq - p - q) + (-pq) + 1 \\
    &= - p - q + 1.
\end{align*}
Therefore, $\mathrm{TB}(Q_{p,q}) < -1$. However, any Lagrangian slice knot type $K$ has $\mathrm{TB}(K) = -1$ by Chantraine \cite{chantraine2010concordance}.
\end{remark}

Prompted by \cref{remarK:comparison}, it is natural to ask whether $K\, \#\, \overline{K}$ is ever Lagrangian slice (see \cref{question:KplusKbar} below). In \cref{sec:connected-summation} we will prove the following result which may be of independent interest. 

\begin{theorem}\label{thm:KplusKbar}
If $K\, \#\,  \overline{K}$ is Lagrangian slice, then $K$ has trivial HOMFLYPT polynomial.     
\end{theorem}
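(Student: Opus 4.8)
The plan is to read the obstruction directly off the HOMFLYPT polynomial $P_K(a,z)$, exploiting that Lagrangian sliceness pins the maximal Thurston--Bennequin invariant of $K\,\#\,\overline{K}$ to its extreme value. I will fix the skein normalization $a P_+ - a^{-1}P_- = z P_0$ with $P_{\mathrm{unknot}} = 1$, under which connected sum is multiplicative and mirroring acts by $a \mapsto a^{-1}$ (on knots, where only even powers of $z$ occur), so that
\begin{equation*}
P_{K\,\#\,\overline{K}}(a,z) = P_K(a,z)\,P_K(a^{-1},z).
\end{equation*}
Writing $e = \min\deg_a P_K$ and $E = \max\deg_a P_K$, the strategy splits into two steps: first show that the $a$-width collapses, i.e. $e = E$, so that $P_K(a,z) = a^{e}p(z)$ for some $p\in\Z[z]$; then kill the residual $z$-dependence with a universal evaluation.

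For the first step, recall that any Lagrangian slice knot type has $\mathrm{TB} = -1$ by Chantraine \cite{chantraine2010concordance}, so $\mathrm{TB}(K\,\#\,\overline{K}) = -1$. Feeding this into the HOMFLYPT (Morton, Franks--Williams) estimate for the Thurston--Bennequin invariant, which in the normalization above reads $\mathrm{TB}(L) \leq -\max\deg_a P_L - 1$ for every knot $L$, forces $\max\deg_a P_{K\,\#\,\overline{K}} \leq 0$. Since the leading $a$-coefficients are nonzero elements of the integral domain $\Z[z]$ and hence multiply to something nonzero, $\max\deg_a$ is additive under the product above, so
\begin{equation*}
0 \geq \max\deg_a P_{K\,\#\,\overline{K}} = \max\deg_a P_K + \max\deg_a P_K(a^{-1},z) = E - e.
\end{equation*}
As $e \leq E$ always holds, this yields $e = E$, and therefore $P_K(a,z) = a^{e}p(z)$.

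To finish, I will use the classical evaluation $P_L(a,\,a - a^{-1}) = 1$, valid for every link in this normalization; it follows from a short induction on crossing changes, since the specialized skein relation and the specialized unlink value $\delta|_{z = a-a^{-1}} = 1$ are all consistent with the constant $1$. Substituting $z = a - a^{-1}$ into $P_K = a^{e}p(z)$ gives $a^{e}\,p(a - a^{-1}) \equiv 1$ as Laurent polynomials in $a$. But $p(a - a^{-1})$ has $a$-span of width exactly $2\deg p$ with nonvanishing extreme coefficients, so $a^{e}\,p(a-a^{-1})$ can equal the single monomial $1$ only if $\deg p = 0$, $e = 0$, and the constant term is $1$. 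Hence $P_K(a,z) = 1$, as claimed.

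I expect the main obstacle to be conceptual rather than computational: recognizing that Lagrangian sliceness makes the HOMFLYPT--Bennequin bound sharp for $K\,\#\,\overline{K}$, and that this sharpness collapses the $a$-width of $P_K$ to zero. The remaining points --- matching the sign convention of the Bennequin estimate, checking additivity of $\max\deg_a$ under the connected-sum product, and the closing substitution --- are routine once the evaluation $P_K(a, a-a^{-1}) = 1$ is in hand. As an alternative to the single application above, one can instead combine the two bounds $\mathrm{TB}(K) \leq -E - 1$ and $\mathrm{TB}(\overline{K}) \leq e - 1$ with the additivity $\mathrm{TB}(K\,\#\,\overline{K}) = \mathrm{TB}(K) + \mathrm{TB}(\overline{K}) + 1$ of \cite{etnyre2003connected,torisu2003additivity}; both routes force $e = E$.
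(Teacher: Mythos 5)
Your proof is correct, and while it shares the paper's overall skeleton --- first collapse the $a$-span of $P_K$ to zero using a Bennequin-type degree bound made sharp by Lagrangian sliceness, then kill the residual monomial $a^e p(z)$ via the evaluation $P_L(a, a-a^{-1})=1$ --- your middle step is genuinely different and somewhat leaner. The paper works with the maximal self-linking number: it applies the MFW bound to $K$ and $\overline{K}$ separately in the form $\mathrm{SL}(K)+\mathrm{SL}(\overline{K}) \leq -\mathrm{span}_v P_K - 2$, and to exploit this it needs two nontrivial inputs: the connected-sum formula $\mathrm{SL}(K_1 \,\#\, K_2)=\mathrm{SL}(K_1)+\mathrm{SL}(K_2)+1$ (\cref{lemma:sum}, proved via Etnyre--Honda's classification of Legendrian connected sums), and the fact that Lagrangian slice knots are quasipositive and hence maximize the slice-Bennequin bound, giving $\mathrm{SL}(K\,\#\,\overline{K})=-1$. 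You bypass both: Chantraine \cite{chantraine2010concordance} gives $\mathrm{TB}(K\,\#\,\overline{K})=-1$ directly (indeed you only need $\mathrm{TB} \geq -1$, i.e., the existence of the $\mathrm{tb}=-1$ Lagrangian-filling representative), and the transfer of the degree bound from $K\,\#\,\overline{K}$ to $P_K$ is handled by multiplicativity of HOMFLYPT under connected sum together with the elementary additivity of $\max\deg_a$ over the domain $\Z[z]$, which you correctly justify. Your endgame also differs in a minor but pleasant way: where the paper evaluates $g(v^{-1}-v)=v^{-m}$ at the two complex roots $v^*_{\pm}$ of $v^{-1}-v=z$ to force $m=0$ and then $g \equiv 1$, you read off the $a$-span of $a^e p(a-a^{-1})$ --- extreme coefficients $c_d$ and $(-1)^d c_d$ at $a^{e \pm d}$ --- killing $\deg p$ and $e$ in one stroke, entirely within Laurent polynomial algebra. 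The trade-off is that your route leans on the $\mathrm{tb}$-version of the MFW/HOMFLY bound and the standard mirror symmetry $P_{\overline{K}}(a,z)=P_K(a^{-1},-z)=P_K(a^{-1},z)$ for knots (your parenthetical about even $z$-powers is exactly the right justification), whereas the paper's route, though heavier, records the $\mathrm{SL}$ connected-sum formula as a lemma of independent interest; your one flagged convention check (the sign/normalization of the Bennequin estimate, $\mathrm{TB}(L) \leq -\max\deg_a P_L - 1$ in the $aP_+ - a^{-1}P_- = zP_0$ convention) does match the standard statement, so there is no gap.
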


\noindent Whether or not there exists a nontrivial knot with trivial HOMFLYPT polynomial is a well-known open question.

\begin{remark}[Annulus twisting]
Another feature of the knots constructed by Abe and Tange is that they are obtained from Osoinach's annulus twist operation \cite{osoinach2006manifolds,abe2013annulus}. Casals, Etnyre, and Kegel \cite{casals2024steintrace} generalized the annulus twist and annulus presentation to the contact setting, but our knots do not arise from this construction. Their contact annulus presentation, which is particularly suited for constructing interesting Stein traces (hence contact-$(-1)$ surgeries), yields a knot with $\mathrm{tb} = 1.$ (They give a more general construction that can produce $\mathrm{tb}=-1$ knots, but these are in general stabilized, hence non-fillable.)  
\end{remark}

We distinguish our knots in \cref{sec:obstruction} using a hyberbolic volume argument along the lines of \cite{osoinach2006manifolds}. In contrast, Abe and Tange's knots were distinguished in \cite{takioka2019classification} through an involved calculation of the $0^{\mathrm{th}}$ coefficient polynomial of the HOMFLYPT polynomial. It is plausible that this method, together with the twist family formulas established by Kegel and Piccirillo in \cite{kegel2025share}, could strengthen the statement of \cref{thm:main} and distinguish all $\Lambda_n$ without assuming $N$ sufficiently large. We instead opt for hyperbolic brevity.

\subsection{Open questions}

We close with some open questions prompted by our work. The first is the natural Lagrangian analogue of \cite[Question 2]{hitt1981many} and its surgery theoretic counterpart: 

\begin{question}\label{question:infinity}
Does there exist a regular Lagrangian slice disk $D$ with $\zeta_{\omega}(D) =\infty$? More generally, does there exist an infinite family of distinct Legendrian knots with contactomorphic tight contact-$(+1)$ surgeries?
\end{question}

Next, recall that Meier and Zupan exhibited ribbon disks which are non-isotopic, have diffeomorphic exteriors, and yet fill the same knot. 

\begin{question}\label{question:same-knot}
Does there exist a Legendrian knot $\Lambda$ bounding two non-isotopic (Ham\-iltonian or smooth, not relative boundary) regular Lagrangian slice disks $D,D'$ such that $X_D$ and $X_{D'}$ are Weinstein deformation equivalent?    
\end{question}

\noindent It is known that many Legendrian knots, including the simplest nontrivial Lagrangian slice knot $\overline{9_{46}}$, bound multiple Lagrangian disks which are distinct up to Hamiltonian (in fact, smooth) isotopy relative boundary \cite{ekholm2016nonloose,li2021smoothly}. However, to the best of the author's knowledge, such disk pairs in the literature are either Hamiltonian isotopic,\footnote{There is a contactomorphism involution on $(S^3, \xi_{\mathrm{st}})$ identifying the two Reeb chords producing the distinct slice disks for $\overline{9_{46}}$ considered in \cite{ekholm2016nonloose,li2021smoothly}. The contact isotopy induced by the involution can be extended into the symplectization as a Hamiltonian isotopy (not relative boundary).} or if not, they have non-homeomorphic exteriors. In a symplectically dual vein, Hayden \cite{hayden2021exotically} produced infinitely many knots bounding distinct holomorphic, hence symplectic, slice disks, but again these have non-homeomorphic exteriors.    

Moving on, the knots constructed in \cref{thm:main} have Weinstein handlebody exteriors such that at least one of the $2$-handle attaching spheres is Legendrian destabilizable; see \cref{fig:main-handle-diagram}. This characteristic is crucial for the efficacy of the construction. 

\begin{question}\label{question:loose}
Does there exist a regular Lagrangian slice disk $D$ with $\zeta_{\omega}(D) > 1$ such that the $2$-handles comprising the handle decomposition of $X_D$ are non-de\-stabilizable?     
\end{question}

\noindent In higher dimensions, a regular Lagrangian disk is \emph{flexible} if its exterior is loose \cite{eliashberg2018flexiblelagrangians}. \cref{question:loose} may be thought of asking for a lower-dimensional analogue of a ``non-flexible'' disk. 

As mentioned above, \cref{thm:KplusKbar} prompts the following simple question which does not appear to be explicitly asked elsewhere: 

\begin{question}\label{question:KplusKbar}
Is there a nontrivial Lagrangian slice knot type of the form $K \,\#\,\overline{K}$? 
\end{question}

\subsection*{Organization} In \cref{sec:construction}, we modify the construction of Abe and Tange to produce Lagrangian disks with equivalent exteriors, proving \eqref{part:main-construction} of \cref{thm:main}. In \cref{sec:obstruction} we use hyperbolicity to distinguish sufficiently many knots, proving \eqref{part:main-obstruction} of \cref{thm:main}. Finally, in \cref{sec:connected-summation} we prove \cref{thm:KplusKbar} on Lagrangian sliceness of $K \, \#\, \overline{K}$. 

\subsection*{Acknowledgments} The author would like to thank Sean Eli, B\"ulent Tosun, and Alex Zupan for interest in our work, and John Etnyre for explaining \cref{lemma:sum}.

\section{Construction}\label{sec:construction}

In this section we prove \eqref{part:main-construction} of \cref{thm:main} by constructing the family of Lagrangian slice disks with Weinstein deformation equivalent exteriors. The proof relies on the following lemma, which is our Legendrian replacement for the mechanism in \cite[Figure 4]{abe2022ribbon}. Informally, it allows for the transfer of a $\pm$-double Legendrian stabilization from one strand to another by handlesliding over a contact-$(+1)$ ``meridional'' surgery.

\begin{lemma}\label{lemma:main}
Let $U$ be the max-tb Legendrian unknot in a Darboux ball. After performing contact-$(+1)$ surgery along $U$, the two-component Legendrian tangles depicted in \cref{fig:main-lemma} are Legendrian isotopic relative to their endpoints.     
\end{lemma}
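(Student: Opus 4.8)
The plan is to prove \cref{lemma:main} by explicit Legendrian handle slides in the front projection, using the contact Kirby calculus for contact-$(\pm 1)$ surgeries. The essential tool is the Legendrian handle slide over a surgery curve: a Legendrian strand $\gamma$ passing through the contact-$(+1)$ surgery on $U$ may be replaced, up to Legendrian isotopy in the surgered manifold, by the Legendrian band sum of $\gamma$ with a contact-framed push-off of $U$. This is the contact analogue of the smooth handle slide driving the mechanism in \cite[Figure 4]{abe2022ribbon}, and it is exactly what should let a stabilization cross the meridional surgery.

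First I would fix a convenient local model. Since $U$ is the max-tb Legendrian unknot, I draw it in the front projection as the standard two-cusp oval with $\mathrm{tb}(U) = -1$ and $\mathrm{rot}(U) = 0$, arranged so that both tangle strands pass once through the disk it bounds; thus $U$ is a meridian linking each strand geometrically once. With this normalization the contact-framed push-off of $U$ is again a max-tb unknot linking $U$ once, so a band sum along it is trivially framed and rotation-number-neutral. I would record from \cref{fig:main-lemma} precisely where the $\pm$-double stabilization (a single zig-zag, which preserves $\mathrm{rot}$ and drops $\mathrm{tb}$ by $2$) sits on the first strand.

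Next I would carry out the slide itself. Handlesliding the second strand over $U$ and then the stabilized first strand back over $U$, I would track the cusps through each band sum and simplify with Legendrian Reidemeister I--III moves. Because the push-off contributes a rotation-neutral, trivially framed band, the net effect should be to carry the $\pm$-double zig-zag off the first strand and reproduce it on the second, leaving the remainder of the tangle unchanged, matching the second picture of \cref{fig:main-lemma}. Throughout, the outer endpoints of the tangle are never moved, so the resulting Legendrian isotopy is relative to $\partial$.

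The hard part will be the front-projection bookkeeping in this last step: ensuring that the zig-zag is transported with the correct sign and multiplicity (a genuine $\pm$-pair rather than, say, two like-signed stabilizations), that every intermediate diagram is a valid Legendrian tangle in the surgered Darboux ball, and that no spurious stabilization is accumulated on strands that should be left untouched. As a consistency check I would verify that the classical invariants match across the move: Legendrian isotopy preserves $\mathrm{tb}$ and $\mathrm{rot}$, the underlying smooth isotopy is supplied by the ordinary Kirby handle slide, and matching $(\mathrm{tb}, \mathrm{rot})$ together with the smooth tangle type should pin down the transfer. I expect the sign tracking in the band sums to be the most delicate point.
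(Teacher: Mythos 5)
Your proposal is correct and takes essentially the same route as the paper: the paper's proof (\cref{fig:main-lemma-proof}) is precisely a sequence of handleslides of the tangle strands across the contact-$(+1)$ surgery on $U$, using the local model of \cite{ding2009handle} (i.e.\ the Legendrian band sum with a contact-framed push-off that you describe), interleaved with Legendrian Reidemeister moves, with the endpoints fixed throughout. One small caution for the bookkeeping you flag as delicate: a single zig-zag is one stabilization (dropping $\mathrm{tb}$ by $1$ and shifting $\mathrm{rot}$ by $\pm 1$), so the $\pm$-double stabilization being transferred is an opposite-signed pair of zig-zags, which is what makes the move $\mathrm{rot}$-neutral with $\mathrm{tb}$ dropping by $2$.
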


\begin{figure}[ht]
	\centering
    \begin{overpic}[width=0.8\textwidth]{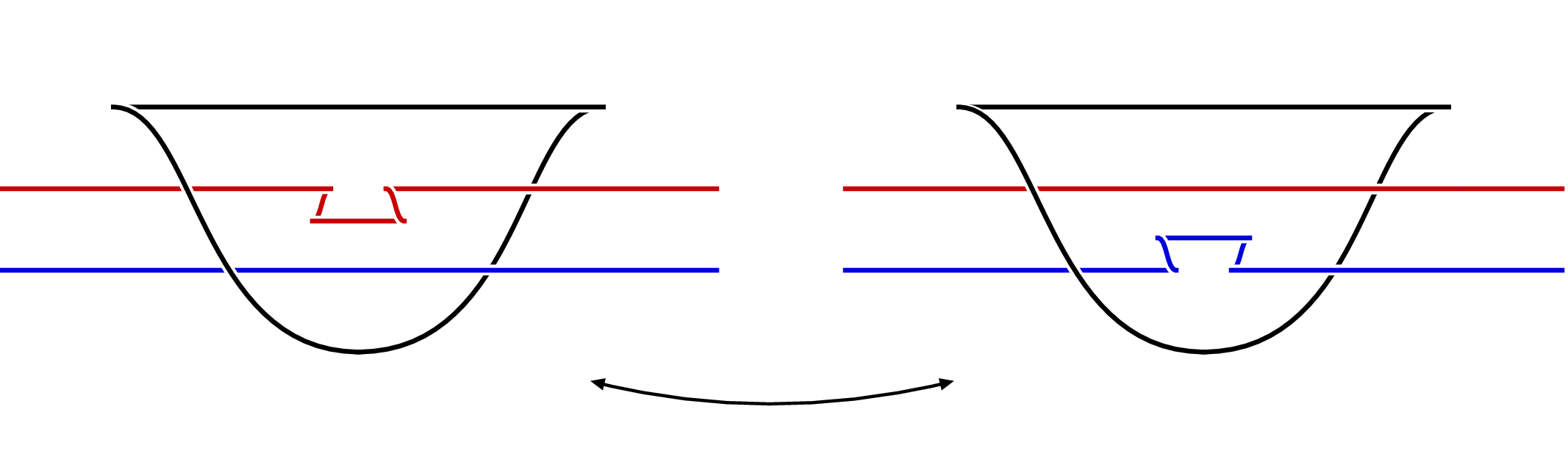}
       \put(20,24.25){\small $(+1)$}
       \put(4,22){\small $U$}
       \put(74,24.25){\small $(+1)$}
       \put(58,22){\small $U$}
	\end{overpic}
	\caption{The statement of \cref{lemma:main}.}
	\label{fig:main-lemma}
\end{figure}

\begin{proof}
The proof is contained in \cref{fig:main-lemma-proof} through a sequence of handelslides and Legendrian isotopies. First, in \cref{fig:main-lemma-proof1}, we slide the lower strand down across the contact-$(+1)$ surgery according to the local model established by \cite{ding2009handle}. We then obtain the first frame of \cref{fig:main-lemma-proof2} by performing a sequence of Legendrian Reidemeister moves. Performing the indicated handleslide then produces the second frame. Additional Legendrian Reidemeister moves yield the final modification in \cref{fig:main-lemma-proof3}.  
\end{proof}

\begin{figure}[ht]
	\centering

    \begin{subfigure}{0.8\textwidth}
        \begin{overpic}[width=\textwidth]{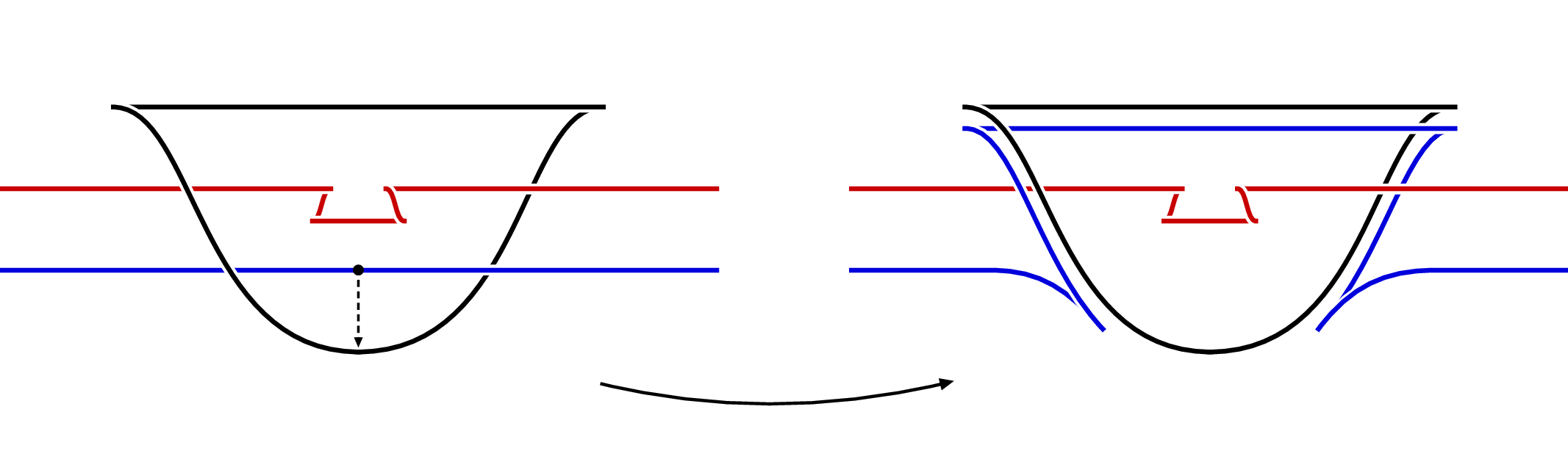}
       \put(20,24.25){\small $(+1)$}
       \put(74,24.25){\small $(+1)$}
	\end{overpic}
	\caption{}
	\label{fig:main-lemma-proof1}
    \end{subfigure}

    \begin{subfigure}{0.8\textwidth}
        \begin{overpic}[width=\textwidth]{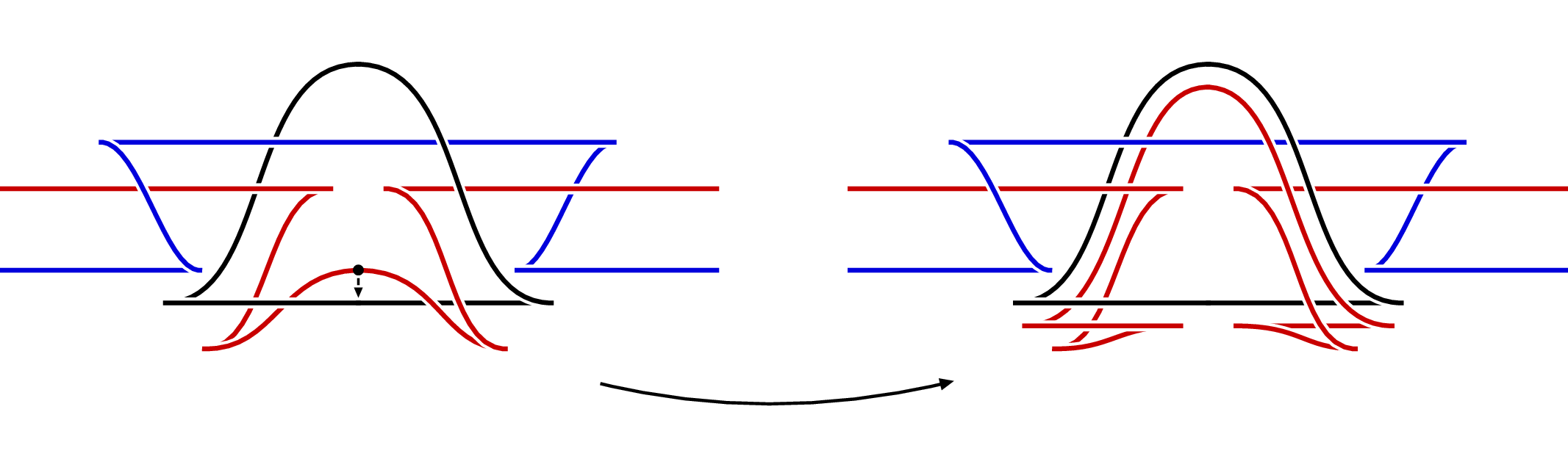}
       \put(20,27){\small $(+1)$}
       \put(74,27){\small $(+1)$}
	\end{overpic}
	\caption{}
	\label{fig:main-lemma-proof2}
    \end{subfigure}

    \begin{subfigure}{0.8\textwidth}
        \begin{overpic}[width=\textwidth]{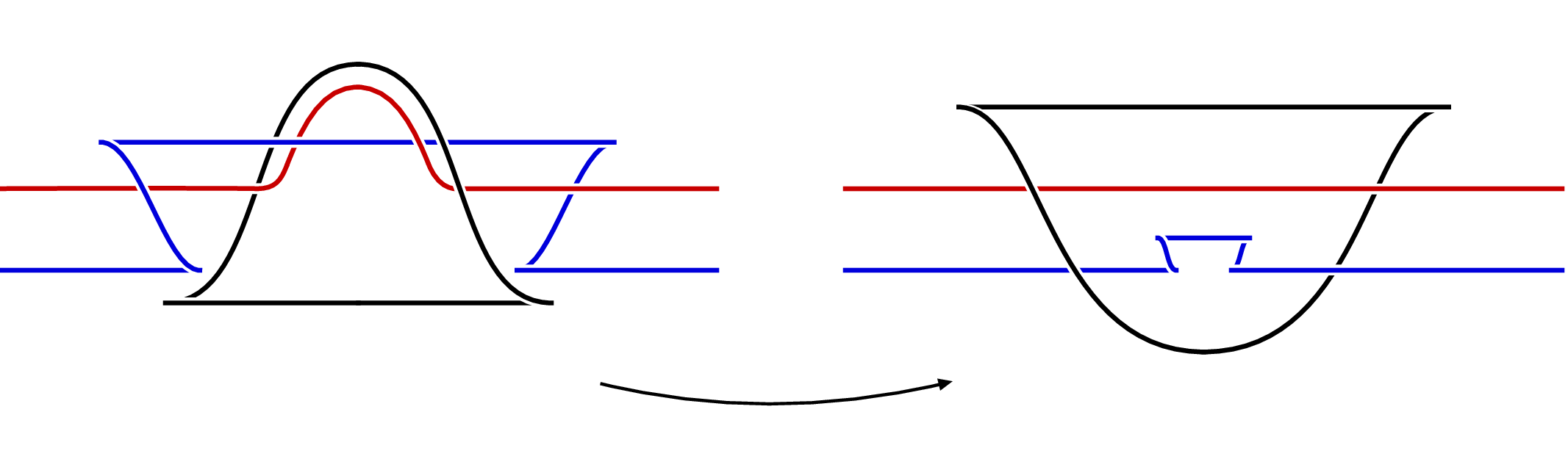}
       \put(20,27){\small $(+1)$}
       \put(74,24.25){\small $(+1)$}
	\end{overpic}
	\caption{}
	\label{fig:main-lemma-proof3}
    \end{subfigure}

\caption{The proof of \cref{lemma:main}.}
	\label{fig:main-lemma-proof}

\end{figure}

With \cref{lemma:main} in hand, we construct a family of Lagrangian slice knots following \cite{conway2021symplectic}. Fix $N\geq 1$. Consider the Weinstein handlebody diagram in \cref{fig:main-handle-diagram}, where two Weinstein $1$-handles correspond to the contact-$(+1)$ surgeries on the dotted max-tb unknots, and two Weinstein $2$-handles are attached along $\Lambda_R, \Lambda_B$, inducing contact-$(-1)$ surgeries.

\begin{figure}[ht]
	\centering
    \begin{overpic}[width=0.9\textwidth]{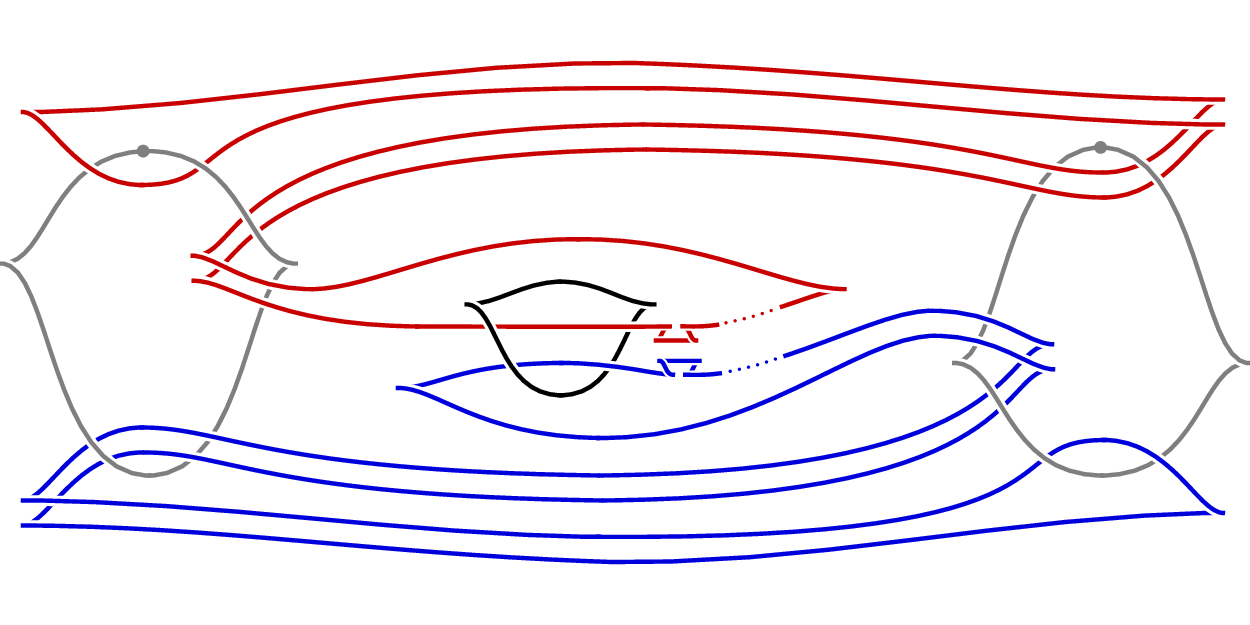}
       \put(43,28.5){\footnotesize $\Lambda_{n,N}$}

       \put(20,16){\footnotesize \textcolor{darkblue}{$\Lambda_B$}}
       \put(26,15){\footnotesize \textcolor{darkblue}{$(-1)$}}

       \put(67,34){\footnotesize \textcolor{darkred}{$\Lambda_R$}}
       \put(73,33){\footnotesize \textcolor{darkred}{$(-1)$}}

       \put(-2,20){\footnotesize \textcolor{gray}{$(+1)$}}

       \put(98,28){\footnotesize \textcolor{gray}{$(+1)$}}

       \put(70,4){\tiny \textcolor{darkblue}{$n$ double $\pm$-stabilizations}}
       \put(71,45.25){\tiny \textcolor{darkred}{$2N-n$ double $\pm$-stabilizations}}
       
	\end{overpic}
	\caption{A handlebody presentation of the knot $\Lambda_{n,N}$.}
	\label{fig:main-handle-diagram}
\end{figure}

Ignoring the knot $\Lambda_{n,N}$, the attaching spheres $\Lambda_R, \Lambda_B$ may be Legendrian isotoped so that each dotted $1$-handle is a max-tb meridian around each attaching sphere. This implies that, up to Weinstein homotopy, the Weinstein handles may be put in canceling position and thus the handlebody diagram is a presentation of standard $B^4$.

\begin{proof}[Proof of \eqref{part:main-construction} of \cref{thm:main}.]
By \cite[Theorem 1.10]{conway2021symplectic}, $\Lambda_{n,N}$ bounds a regular Lagrangian disk $D_{n,N}$ in $B^4$. Indeed, before attaching the Weinstein handles, $\Lambda_{n,N}$ is a max-tb unknot and hence bounds a standard Lagrangian disk in $B^4$. The disk $D_{n,N}$ is the image of this standard disk in $B^4$ after attaching the Weinstein handles.

The exterior $X_{D_{n,N}}$ is obtained by converting $\Lambda_{n,N}$ into a dotted contact-$(+1)$ surgery. Then, by repeatedly applying \cref{lemma:main} to the resulting handle decompositions of the exteriors, we may exchange double stabilizations between $\Lambda_R$ and $\Lambda_B$ to obtain Weinstein handlebody equivalences between $X_{D_{n,N}}$ and $X_{D_{m,N}}$ for all $n\neq m \in \{1,\dots, 2N\}$. This establishes that the family of knots $\Lambda_{n,N}$ as presented in the boundary of the handlebody diagram in \cref{fig:main-handle-diagram} satisfy \eqref{part:main-construction} of \cref{thm:KplusKbar}. To complete the proof, it remains to establish that the knot $\Lambda_{n,N}$ in \cref{fig:main-handle-diagram} becomes the knot $\Lambda_{n,N}$ in \cref{fig:main} after canceling the Weinstein handles.

\begin{figure}[ht]
	\centering
    \begin{overpic}[width=0.8\textwidth]{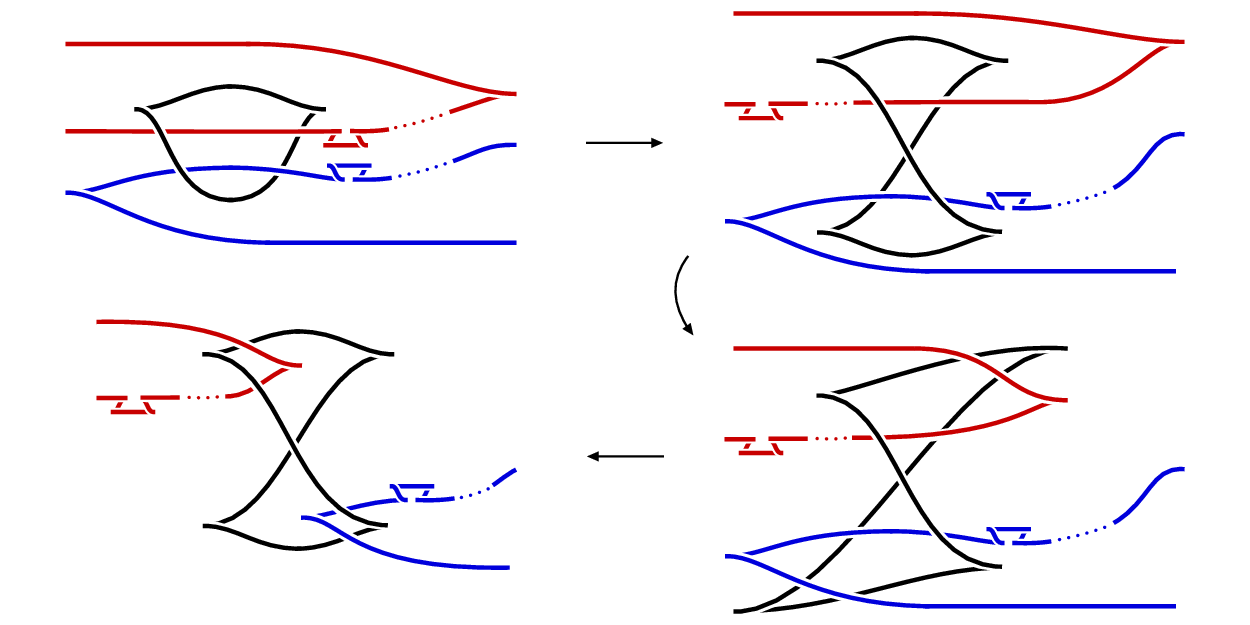}
       
	\end{overpic}
	\caption{A sequence of Legendrian Reidemeister moves near $\Lambda_{n,N}$.}
	\label{fig:main-handle-diagram-proof1}
\end{figure}

The first step of the necessary handle calculus is the local Legendrian isotopy depicted in \cref{fig:main-handle-diagram-proof1}. Once the attaching spheres are appropriately interlocked with cusps of $\Lambda_{n,N}$, a straightforward sequence of Legendrian isotopies retracting $\Lambda_R$ and $\Lambda_B$ yields the equivalent surgery diagram in \cref{fig:main-handle-diagram-proof2}.

\begin{figure}[ht]
	\centering
    \begin{overpic}[width=0.9\textwidth]{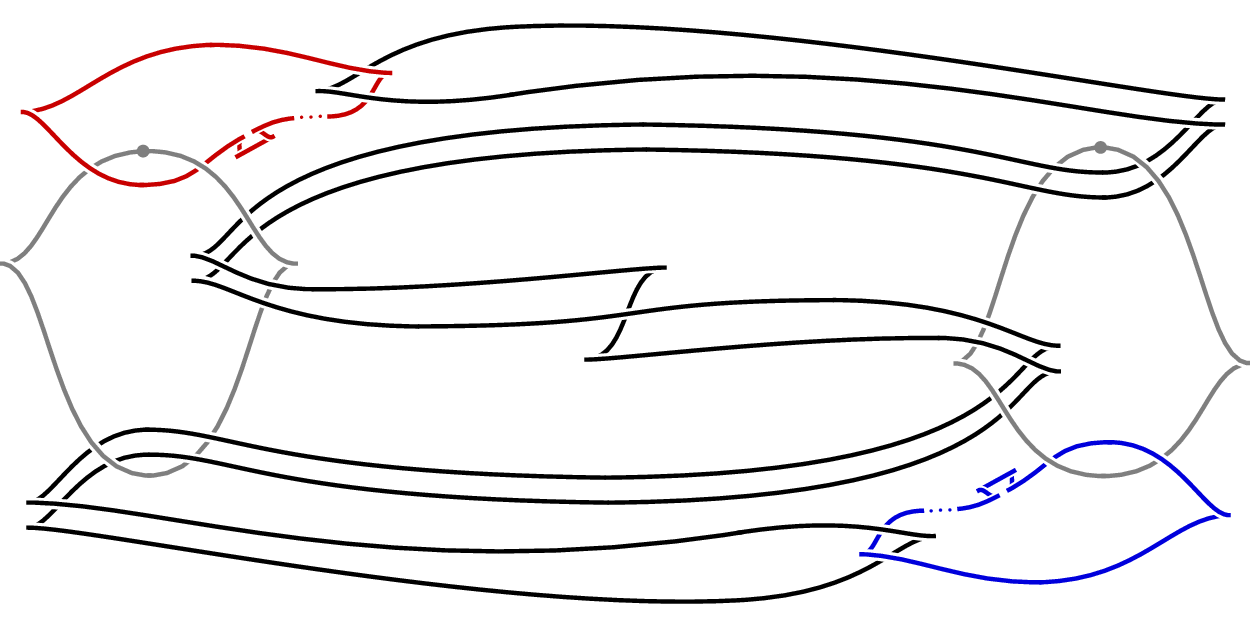}
       \put(46,30.5){\small $\Lambda_{n,N}$}

       \put(98,12){\footnotesize \textcolor{darkblue}{$(-1)$}}

       \put(-2,36){\footnotesize \textcolor{darkred}{$(-1)$}}

       \put(-2,20){\footnotesize \textcolor{gray}{$(+1)$}}

       \put(98,28){\footnotesize \textcolor{gray}{$(+1)$}}

       \put(73,1){\tiny \textcolor{darkblue}{$n$ double $\pm$-stabilizations}}
       \put(4,47.75){\tiny \textcolor{darkred}{$2N-n$ double $\pm$-stabilizations}}
       
	\end{overpic}
	\caption{}
	\label{fig:main-handle-diagram-proof2}
\end{figure}

Next, we perform a sequence of handleslides of $\Lambda_{n,N}$ across the contact-$(-1)$ surgeries to disconnect the former with the $1$-handles. The initial isotopies and slides across $\Lambda_B$ are depicted locally in \cref{fig:main-handle-diagram-proof3}. From the first to the second panel we perform two slides indicated by the arrow, and the third panel is obtained from additional Legendrian Reidemeister moves meant to prepare for the next round of handleslides. We have also introduced the box notation for ease of presentation.

\begin{figure}[ht]
	\centering
    \begin{overpic}[width=\textwidth]{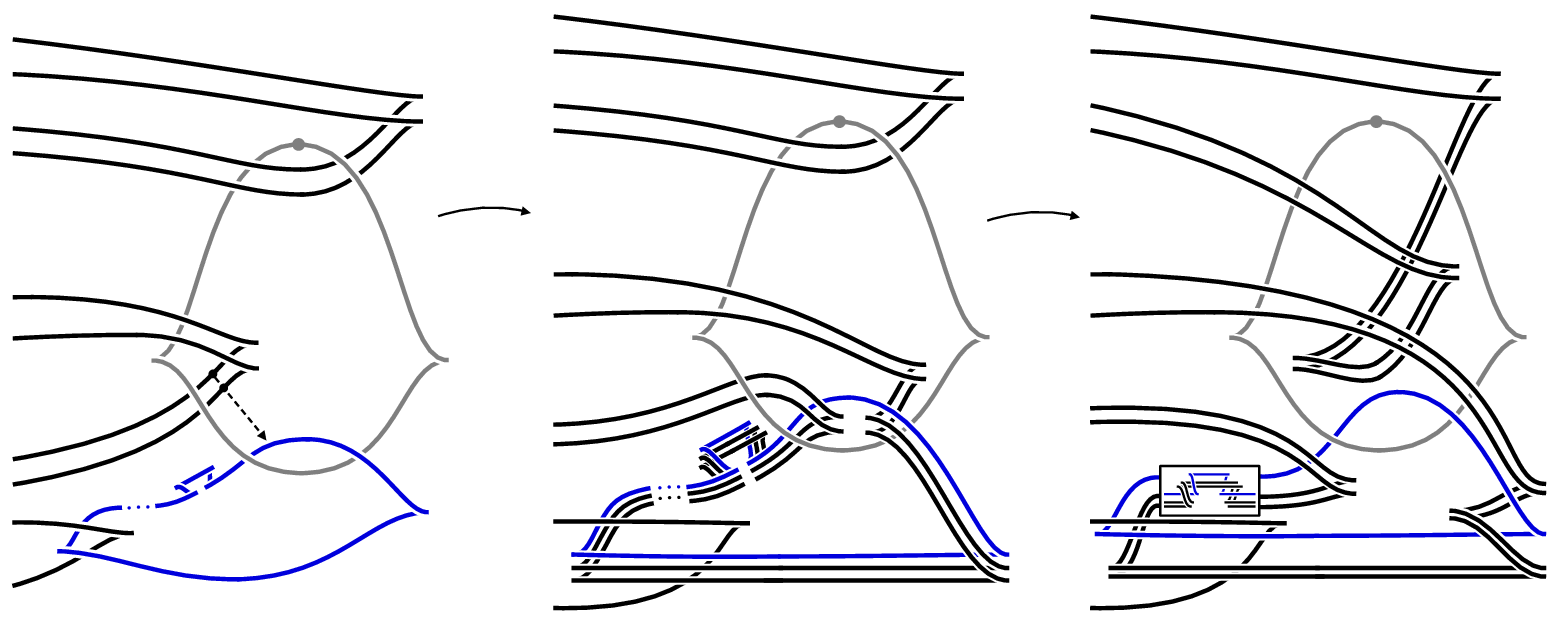}

       \put(87.5,7){\footnotesize \textcolor{darkblue}{$(-1)$}}
       \put(51.75,5.5){\footnotesize \textcolor{darkblue}{$(-1)$}}

       \put(23,3){\footnotesize \textcolor{darkblue}{$(-1)$}}

       \put(8.5,25){\footnotesize \textcolor{gray}{$(+1)$}}

       \put(95,25){\footnotesize \textcolor{gray}{$(+1)$}}

       \put(43,25){\footnotesize \textcolor{gray}{$(+1)$}}

	\end{overpic}
	\caption{Two contact-$(-1)$ handleslides, then Reidemeister moves.}
	\label{fig:main-handle-diagram-proof3}
\end{figure}

\cref{fig:main-handle-diagram-proof4} then performs two more handleslides of $\Lambda_{n,N}$ over $\Lambda_B$. After a sequence of Reidemeister moves, we obtain the result in the third panel, where the dotted $1$-handle is a meridional contact-$(+1)$ surgery for the contact-$(-1)$ surgery along $\Lambda_B$.

\begin{figure}[ht]
	\centering
    \begin{overpic}[width=\textwidth]{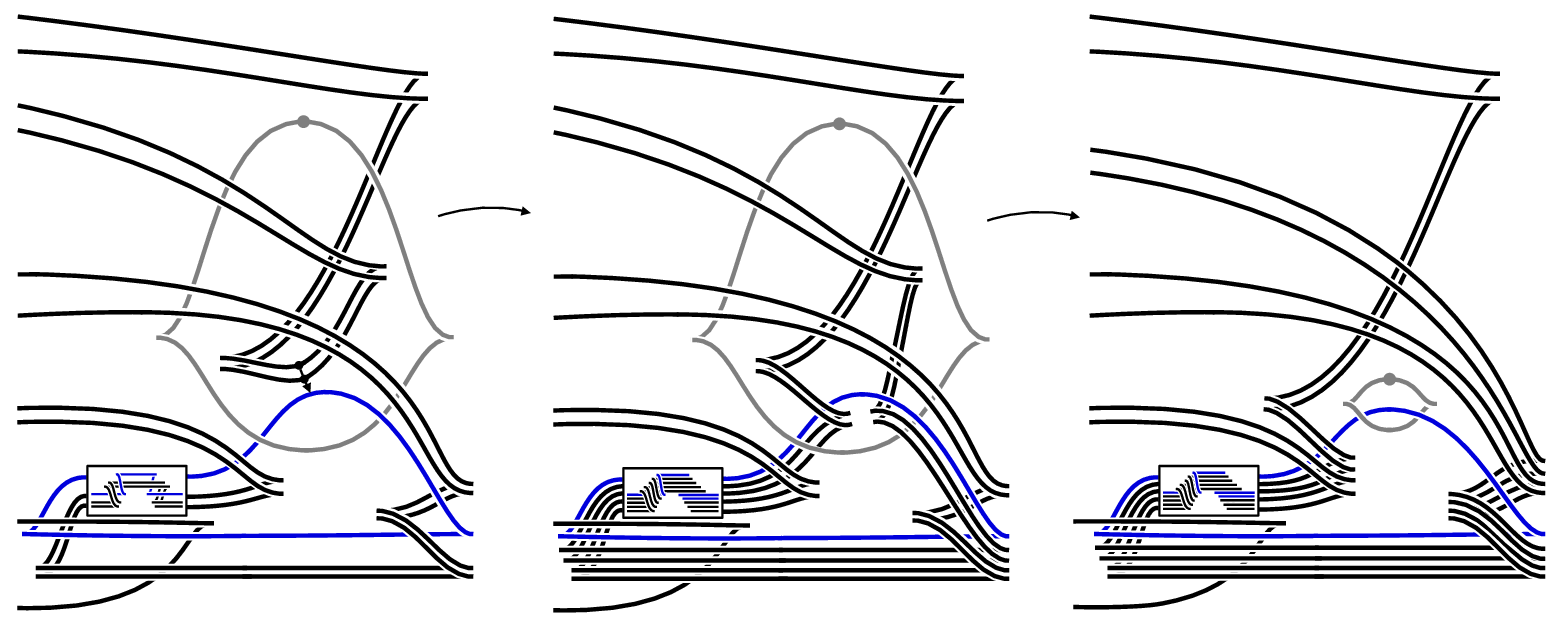}

       \put(87.5,7){\footnotesize \textcolor{darkblue}{$(-1)$}}
       \put(53.5,7){\footnotesize \textcolor{darkblue}{$(-1)$}}

       \put(19,7){\footnotesize \textcolor{darkblue}{$(-1)$}}

       \put(8.5,25){\footnotesize \textcolor{gray}{$(+1)$}}

       \put(87.5,10){\footnotesize \textcolor{gray}{$(+1)$}}

       \put(43,25){\footnotesize \textcolor{gray}{$(+1)$}}

	\end{overpic}
	\caption{Two more contact-$(-1)$ handleslides and Reidemeister moves.}
	\label{fig:main-handle-diagram-proof4}
\end{figure}

The same sequence (rotated by $\pi$ radians) provides the necessary steps near $\Lambda_R$. Afterwards, both $\Lambda_R$ and $\Lambda_B$ are in geometrically canceling position with $1$-handles and may be erased via a Weinstein homotopy \cite{ding2009handle}. Left in the wake of the cancellation is the knot $\Lambda_{n,N}$ as depicted in \cref{fig:main}.
\end{proof}

\section{Obstruction}\label{sec:obstruction}

In this section we prove \eqref{part:main-obstruction} of \cref{thm:main} by showing that the topological knot types $K_{n,N}$ of the Legendrian knots $\Lambda_{n,N}$, $1\leq n \leq N$, are distinct for sufficiently large $N$. For convenience, we have redrawn and simplified the underlying topological knot diagram in \cref{fig:main-smooth} by collecting full twists into the boxes.

\begin{figure}[ht]
	\centering
    \begin{overpic}[width=0.9\textwidth]{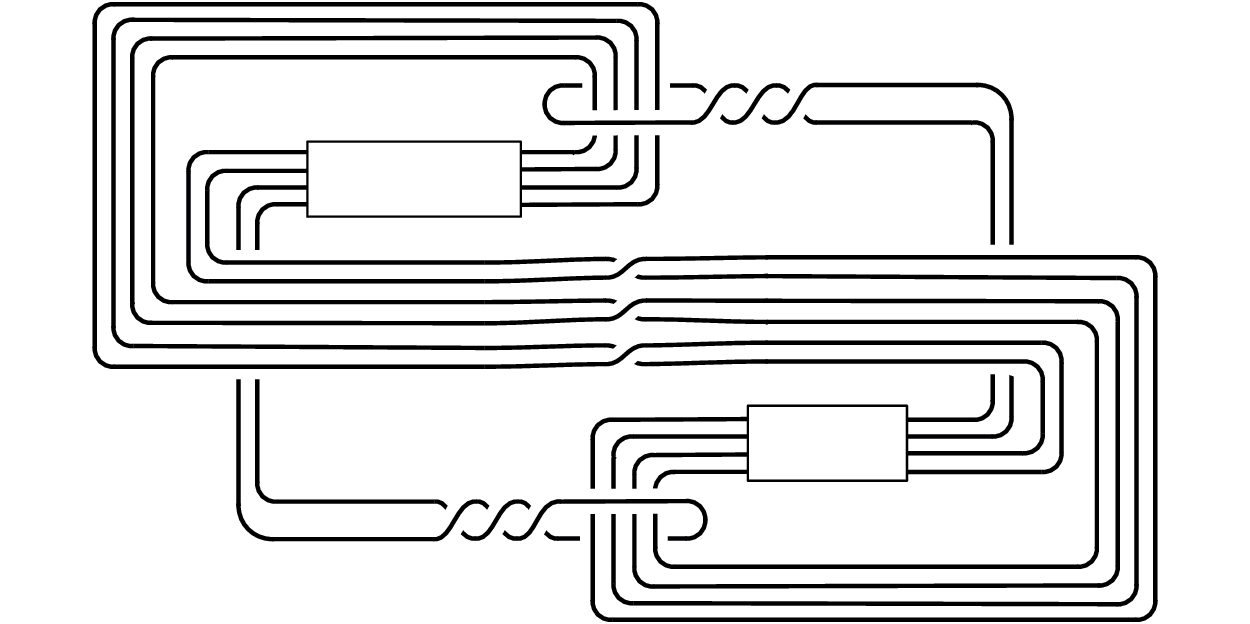}
       \put(10.5,13){$K_{n,N}$}
       \put(61,13.75){\small $-(n+2)$}
       \put(25.5,34.75){\small $-2N + n-2$}
	\end{overpic}
	\caption{The topological knots $K_{n,N}$ in the statement of \cref{thm:main}. Negative integral boxes indicate full left-handed twists.}
	\label{fig:main-smooth}
\end{figure}

A further sequence of smooth isotopies to simplify the knot diagram is presented in \cref{fig:main-smooth-iso}. The transition from the first panel to the second panel is obtained by pulling the central three bands over the $-(n+2)$ twist box. This induces a full right-hand twist which we place in a box near the left side of the diagram.

The passage to the lower diagram involves three steps. First, the full twist on the two right-most bands passing through the $1$-box are slid down and to the right to be absorbed into the rightmost twist box. Second, the leftmost band which had passed through the $1$-box now wraps once around the other two bands, and has a full positive twist along the band itself; we may cancel this full band twist by zipping two of the negative half-twists along the band. Third, we zip the three negative half-twists in the upper right corner near the ribbon singularity along the band and collect them with the three negative half-twists located near the other ribbon singularity into a twist box with $-3$ full twists.

\begin{figure}[ht]
	\centering
    \begin{overpic}[width=\textwidth]{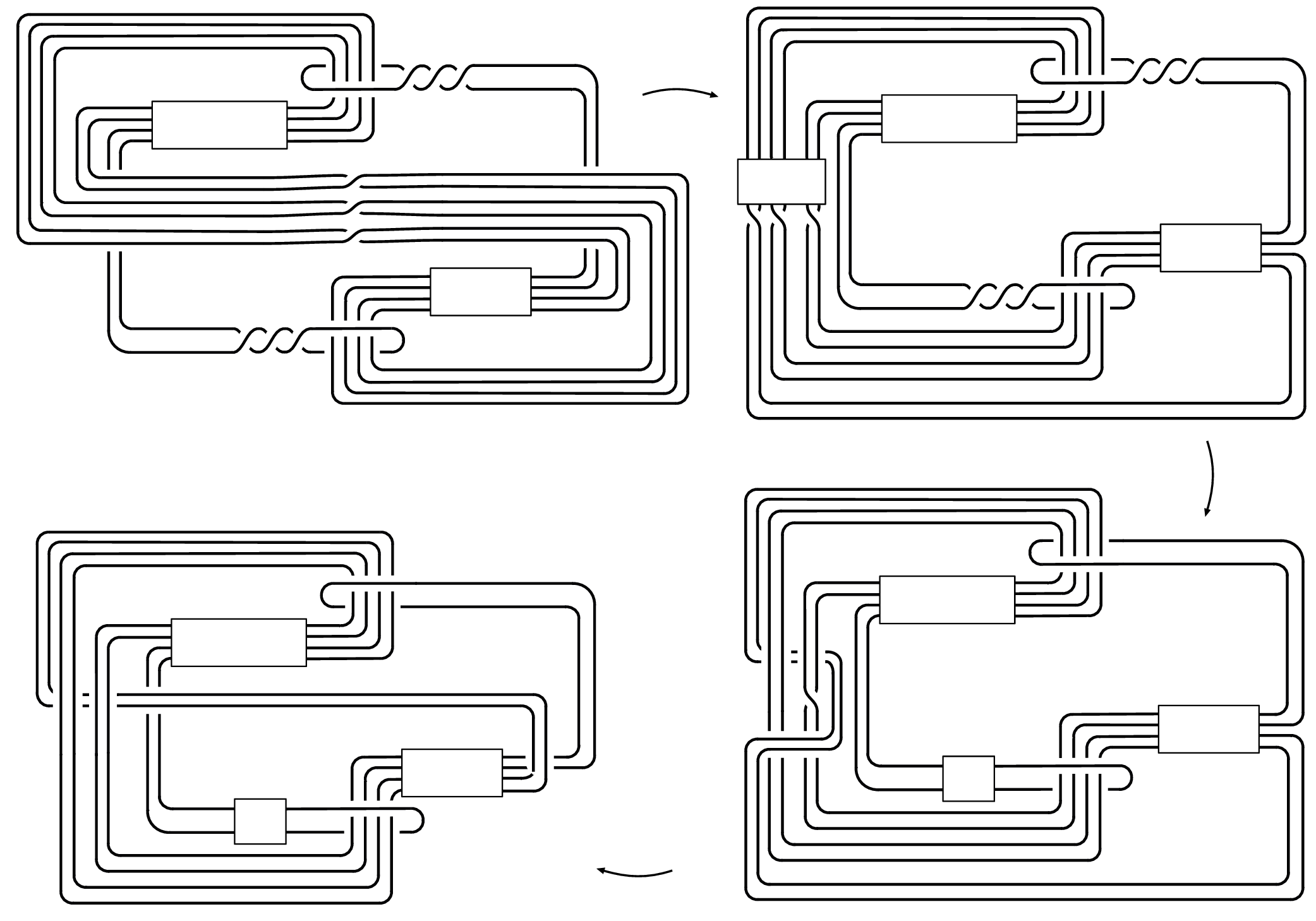}
       \put(33.25,47.25){\tiny $-(n+2)$}
       \put(11.9,59.85){\tiny $-2N + n-2$}

       \put(88.75,50.75){\tiny $-(n+2)$}
       \put(67.4, 60.5){\tiny $-2N + n-2$}
       \put(58.85,55.75){\tiny $1$}

       \put(88.75,14){\tiny $-(n+1)$}
       \put(67.2,23.8){\tiny $-2N + n-2$}
       \put(72.35,10.4){\tiny $-3$}

       \put(31.3,10.75){\tiny $-(n+1)$}
       \put(13.4,20.75){\tiny $-2N + n-2$}
       \put(18.4,7){\tiny $-4$}
	\end{overpic}
	\caption{Simplifying $K_{n,N}$.}
	\label{fig:main-smooth-iso}
\end{figure}

To obtain the lower-right panel of \cref{fig:main-smooth-iso}, we then take the negative half-twist near the left side of the diagram and zip it down through the $-(n+1)$-box and up toward the upper-right ribbon singularity. We then use the half-twist to swap the over and under strands along the ribbon singularity. Finally, we flip the lower-most band of the diagram across the $(-3)$-box and the $-(n+1)$-box, inducing a full negative twist into the band, which we zip along and absorb into the $(-3)$-box.

\begin{proof}[Proof of \eqref{part:main-obstruction} of \cref{thm:main}.]
A further redrawing of $K_{n,N}$ is given in \cref{fig:main-hyperbolic}. Via Rolfsen twists, $K_{n,N}$ is the image of the knot $K$ on the right side of the figure under Dehn surgery on the unknots $R$ and $B$ with slopes $\frac{1}{2N - n + 2}$ and $\frac{1}{n+1}$, respectively.  

\begin{figure}[ht]
	\centering
    \begin{overpic}[width=0.85\textwidth]{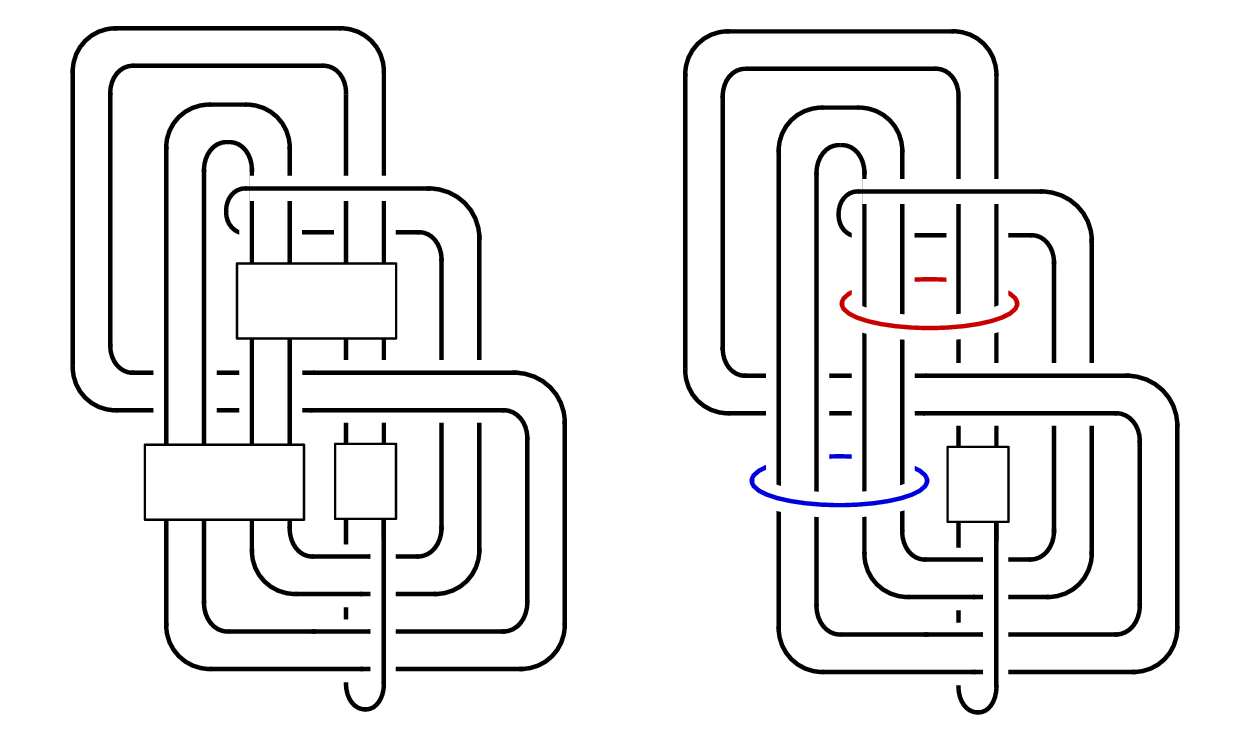}
       \put(5.5,10){$K_{n,N}$}
       \put(13.75,19){\footnotesize $-(n+1)$}
       \put(27.75,19){\footnotesize $-4$}
       \put(20,33.5){\tiny $-2N + n-2$}

       \put(55,19.25){\footnotesize \textcolor{darkblue}{$\frac{1}{n+1}$}}
       \put(89,33.25){\footnotesize \textcolor{darkred}{$\frac{1}{2N - n + 2}$}}
       \put(76.75,19){\footnotesize $-4$}

       \put(58,10){$K$}
       \put(66.5,15){\tiny \textcolor{darkblue}{$B$}}
       \put(73.6,29.5){\tiny \textcolor{darkred}{$R$}}

	\end{overpic}
	\caption{}
	\label{fig:main-hyperbolic}
\end{figure}

The link $K \cup R \cup B$ has DT code
\begin{multline*}
[(-28,-56,-92,-90,-88,-86,-74,-98,46,-32,-60,14,44,-96,-106,78,\\
-50,102,22,110,4,-72,-80,-34,64,18,-26,-94,-108,76,-104,48,20,112,\\
2,54,84,-38,-66,-52,100,-24,-12,-10,-8,-6,42,70),\\
(36,82,-16,-62),(-30,68,40,-58)].
\end{multline*}
Using the \texttt{verify\_hyperbolicity()} command in SnapPy with Sage \cite{snappy}, we confirm that $K \cup R \cup B$ is hyperbolic with volume $\approx 23.449$. 

Thurston \cite{thurston1980geometry} showed that a hyperbolic manifold with $k$ cusps remains hyperbolic after Dehn filling with slopes $\tfrac{p_1}{q_1}, \dots, \tfrac{p_k}{q_k}$ provided $\sum_{i=1}^k p_i^2 + q_i^2$ is sufficiently large. Consequently, provided $N$ is sufficiently large, the complement of $K_{n,N}$ (which is obtained from $M:= S^3 \setminus (K \cup R \cup B)$ by Dehn filling $R \cup B$ with slopes $\tfrac{p_R}{q_R} = \tfrac{1}{2N - n + 2}$ and $\tfrac{p_B}{q_B} = \tfrac{1}{n+1}$) is hyperbolic for all $n$; see \cref{fig:main-slopes}.

Nuemann and Zagier \cite{neumann1985volumes} made this more precise by computing volumes of Dehn fillings to high order precision. As the squared length of a $(1, \ell)$-curve on a square torus of unit area is $1 + \ell^2$, their formula gives 
\begin{equation}\label{eq:volumes}
\mathrm{Vol}(S^3 \setminus K_{n,N}) = \mathrm{Vol}(M) - \pi^2\left(\frac{1}{1 + (n+1)^2} + \frac{1}{1 + (2N - n + 2)^2}\right)
+ \mathcal{O}\left(\frac{1}{(|\vec{p},\vec{q})|^4}\right)    
\end{equation}
To distinguish our knots, we then appeal to the following elementary lemma.

\begin{lemma}
For $C \gg 0$, $f(x) = \frac{1}{1+x^2} + \frac{1}{1 + (C - x)^2}$ is strictly monotonic on the interval $(\tfrac{1}{\sqrt{3}},\tfrac{C}{2})$.  
\end{lemma}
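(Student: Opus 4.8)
The plan is to reduce the claim to the elementary monotonicity of a single auxiliary function, exploiting the symmetry of $f$ about $x = C/2$. Differentiating directly gives
\[
f'(x) = \frac{-2x}{(1+x^2)^2} + \frac{2(C-x)}{(1+(C-x)^2)^2} = 2\bigl(g(C-x) - g(x)\bigr),
\]
where $g(t) := t/(1+t^2)^2$. Thus the sign of $f'$, and hence the monotonicity of $f$, is governed entirely by comparing the values $g(x)$ and $g(C-x)$.

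Next I would analyze $g$ on $(0,\infty)$. A short computation yields
\[
g'(t) = \frac{1 - 3t^2}{(1+t^2)^3},
\]
so $g$ has a unique critical point at $t = 1/\sqrt{3}$: it is strictly increasing on $(0, 1/\sqrt{3})$ and strictly decreasing on $(1/\sqrt{3}, \infty)$, with a global maximum at $t = 1/\sqrt{3}$.

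Now fix $x \in (1/\sqrt{3}, C/2)$ and assume $C > 2/\sqrt{3}$, which certainly holds for $C \gg 0$. Then $x > 1/\sqrt{3}$ and $C - x > C/2 > 1/\sqrt{3}$, so both arguments lie in the region where $g$ is strictly decreasing. Since $x < C/2 < C - x$, strict monotonicity of $g$ on $(1/\sqrt{3},\infty)$ forces $g(x) > g(C-x)$, whence $f'(x) < 0$ throughout the interval. Therefore $f$ is strictly decreasing on $(1/\sqrt{3}, C/2)$, as claimed. (By the symmetry $f(C-x) = f(x)$ it is correspondingly strictly increasing on the reflected interval, which is why the monotonicity statement is restricted to one side of $C/2$.)

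I expect no genuine obstacle here; the single point requiring care is verifying that \emph{both} $x$ and $C-x$ exceed the critical value $1/\sqrt{3}$, so that the comparison of $g$-values occurs entirely on the decreasing branch. This is precisely the role of the left endpoint $1/\sqrt{3}$ and of the hypothesis $C \gg 0$ (one only needs $C/2 > 1/\sqrt{3}$), which for the volume application is automatic since there $C$ grows linearly in $N$.
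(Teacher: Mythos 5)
Your proof is correct and follows essentially the same route as the paper: both arguments hinge on the auxiliary function $g(t) = t/(1+t^2)^2$ having derivative $(1-3t^2)/(1+t^2)^3 < 0$ for $t > 1/\sqrt{3}$, and both compare $g(x)$ with $g(C-x)$ on the decreasing branch to rule out a sign change of $f'$ on $(\tfrac{1}{\sqrt{3}}, \tfrac{C}{2})$. Your version is marginally more explicit in that you directly conclude $f'(x) < 0$ (so $f$ is strictly decreasing), whereas the paper only rules out critical points, but this is a cosmetic difference rather than a different argument.
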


\begin{proof}
Differentiating, we find that critical points of $f$ occur when 
\begin{equation}\label{eq:derivative}
\frac{x}{(1+x^2)^2} = \frac{C-x}{(1+(C-x)^2)^2}. 
\end{equation}
Note that $x = \tfrac{C}{2}$ is a critical point. We claim there are no critical points on the interval $(\tfrac{1}{\sqrt{3}},\tfrac{C}{2})$. To see this, note that 
\[
\frac{d}{d\tau} \, \frac{\tau}{(1+\tau^2)^2} = \frac{1-3\tau^2}{(1+\tau^2)^3} < 0 \quad \text{ for } \quad |\tau| > \frac{1}{\sqrt{3}}. 
\]
In particular, the left side of \eqref{eq:derivative} is monotonically decreasing on the interval $(\tfrac{1}{\sqrt{3}},\tfrac{C}{2})$, while the right side is monotonically increasing. As the left and right sides are equal when $x = \tfrac{C}{2}$, there cannot be a solution to \eqref{eq:derivative} on $(\tfrac{1}{\sqrt{3}},\tfrac{C}{2})$. The monotonicity claim follows.  
\end{proof}

The lemma implies that the numbers $\tfrac{1}{1 + (n+1)^2} + \tfrac{1}{1 + (2N - n + 2)^2}$ are pairwise distinct for all $n+1\in (\tfrac{1}{\sqrt{3}},\tfrac{2N+3}{2})$. In particular, they are pairwise distinct for $n\in \{1, \dots, N\}$. The volume formula \eqref{eq:volumes} then implies that the knots $K_{1,N}, \dots, K_{N,N}$ are smoothly pairwise distinct, completing the proof of the theorem.
\end{proof}

\begin{figure}[ht]
	\centering
    \begin{overpic}[width=0.75\textwidth]{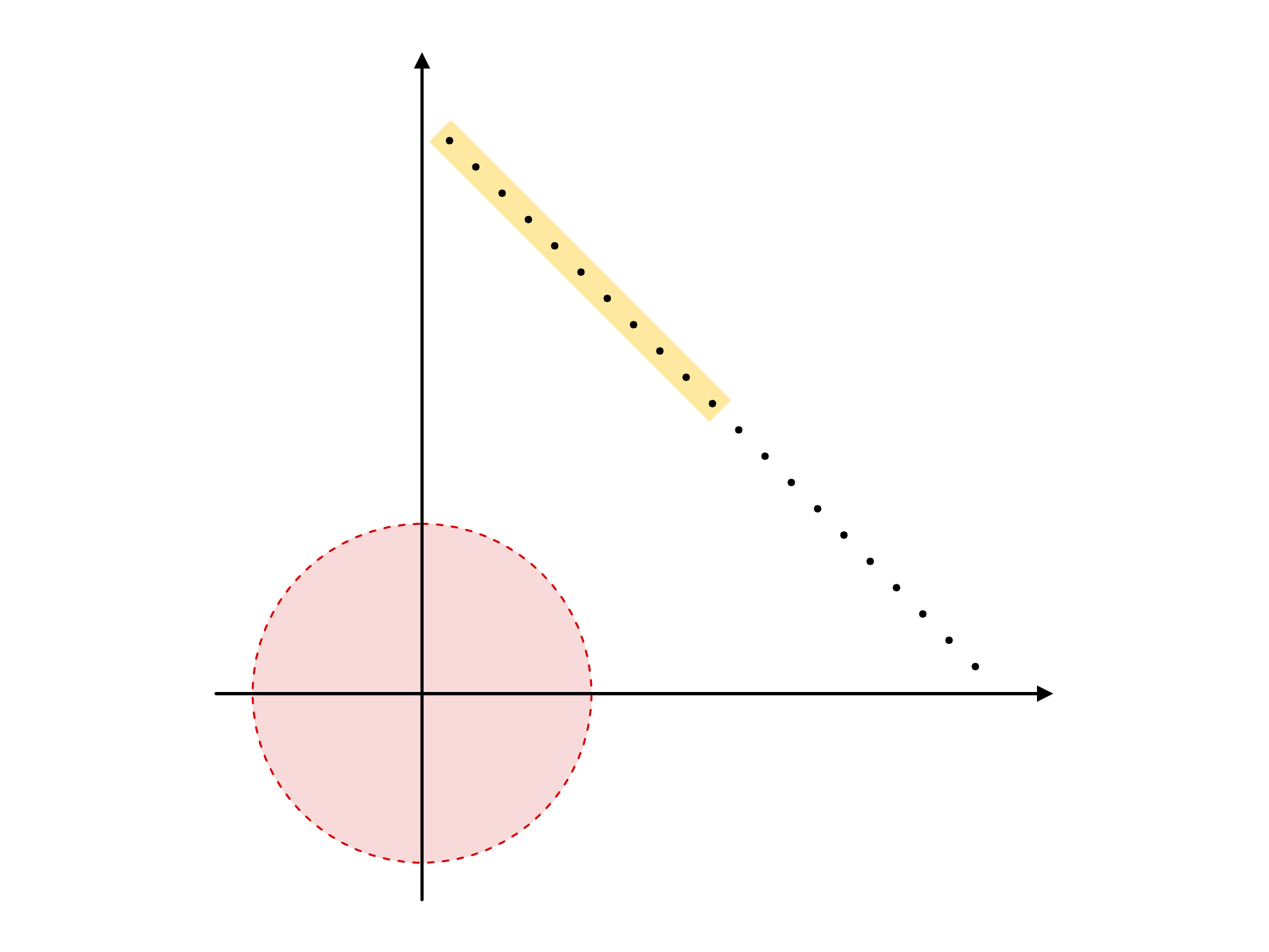}
       \put(84,19.5){\small $q_B$}
       \put(31.75,73.5){\small $q_R$}

       \put(65.75,40.75){\small $q_B + q_R = 2N + 3$}
	\end{overpic}
	\caption{The projection of $(p_B, q_B, p_R, q_R)$-space to the $(q_B,q_R)$ plane. The red disk represents the region of short slopes where Dehn fillings are potentially not hyperbolic. The dotted line represented the slopes $p_B = p_R = 1$ and $q_B + q_R = 2N + 3$. The highlighted region indicates the volumes we will distinguish below.}
	\label{fig:main-slopes}
\end{figure}

\section{Connected summation}\label{sec:connected-summation}

In this section we prove \cref{thm:KplusKbar}. In lieu of a generalization of the $\mathrm{TB}$ argument in \cref{remarK:comparison}, one may similarly hope to obstruct Lagrangian sliceness with the maximal self-linking number $\mathrm{SL}$, an invariant which is also $-1$ for such knots; indeed, a Lagrangian slice knot is algebraic by combined work of \cite{rudolph1983seifertribbons,eliashberg1995pushoff,boileau2001quasipositive} and thus maximizes the slice-Bennequin inequality $\mathrm{SL} \leq -1$ \cite{rudolph1993quasipositivity}. Regarding connected sums, the corresponding formula for maximal self-linking number appears to have been known to experts but has not been recorded in the literature. We thank John Etnyre for suggesting the proof. 

\begin{lemma}\label{lemma:sum}
For all topological knot types $K_1,K_2$ we have $\mathrm{SL}(K_1\, \# \, K_2) = \mathrm{SL}(K_1)+ \mathrm{SL}(K_2) + 1$.
\end{lemma}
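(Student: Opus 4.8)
The plan is to establish the two inequalities $\mathrm{SL}(K_1 \# K_2) \geq \mathrm{SL}(K_1)+\mathrm{SL}(K_2)+1$ and $\mathrm{SL}(K_1 \# K_2) \leq \mathrm{SL}(K_1)+\mathrm{SL}(K_2)+1$ separately, working with transverse representatives and the braid model. Recall that $\mathrm{SL}(K)$ is the maximum of the self-linking number $\mathrm{sl}(\mathcal{T})$ over all transverse representatives $\mathcal{T}$ of $K$ in $(S^3,\xi_{\mathrm{st}})$, a maximum that is finite by the Bennequin inequality $\mathrm{sl}\leq 2g(K)-1$ and hence attained. By the transverse Markov theorem, transverse knots up to transverse isotopy correspond to closed braids up to conjugation and positive (de)stabilization, under which the self-linking number is computed as $\mathrm{sl}(\widehat{\beta}) = w(\beta) - n(\beta)$, where $w$ is the writhe (exponent sum) and $n$ the number of strands. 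In particular $\mathrm{SL}(K) = \max_{\beta}\bigl(w(\beta)-n(\beta)\bigr)$ over all braid representatives $\beta$ of $K$.

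For the lower bound I would take braids $\beta_1,\beta_2$ representing $K_1,K_2$ and realizing $\mathrm{SL}(K_1),\mathrm{SL}(K_2)$, say on $n_1,n_2$ strands with writhes $w_1,w_2$. Merging the last strand of $\widehat{\beta_1}$ with the first strand of $\widehat{\beta_2}$ produces a braid on $n_1+n_2-1$ strands with writhe $w_1+w_2$ whose closure is $K_1\#K_2$. Its self-linking number is
\[
(w_1+w_2)-(n_1+n_2-1) = (w_1-n_1)+(w_2-n_2)+1 = \mathrm{SL}(K_1)+\mathrm{SL}(K_2)+1,
\]
so $\mathrm{SL}(K_1\#K_2)\geq \mathrm{SL}(K_1)+\mathrm{SL}(K_2)+1$. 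This is precisely the transverse connected sum, and the same bookkeeping shows that \emph{any} transverse connected sum adds self-linking numbers with a correction of $+1$.

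For the upper bound I would take a transverse representative $\mathcal{T}$ of $K_1\#K_2$ with $\mathrm{sl}(\mathcal{T})=\mathrm{SL}(K_1\#K_2)$ together with a smoothly embedded decomposing $2$-sphere $S$ meeting $\mathcal{T}$ in two points and separating it into the two summands. The goal is to place $S$ in contact-standard position: after a $C^\infty$-small perturbation $S$ is convex (Giroux), and since $\xi_{\mathrm{st}}$ is tight the dividing set of $S$ is a single circle. One then arranges, by a transverse isotopy of $\mathcal{T}$ supported near $S$, that $\mathcal{T}$ meets $S$ in the standard local model of a transverse arc crossing the dividing curve. Cutting along $S$ and capping off with standard transverse arcs yields transverse representatives $\mathcal{T}_1,\mathcal{T}_2$ of $K_1,K_2$ with $\mathrm{sl}(\mathcal{T}) = \mathrm{sl}(\mathcal{T}_1)+\mathrm{sl}(\mathcal{T}_2)+1$ by the local computation above, whence $\mathrm{SL}(K_1\#K_2)\leq \mathrm{SL}(K_1)+\mathrm{SL}(K_2)+1$. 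Combining the two inequalities finishes the proof.

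The main obstacle is the upper bound, specifically normalizing the decomposing sphere: one must ensure the essential sphere can simultaneously be made convex with connected dividing set and be met by $\mathcal{T}$ in exactly two points realizing the standard transverse connect-sum model, so that the self-linking genuinely splits additively. This is the transverse analogue of the convex-surface argument used by Etnyre and Honda to prove additivity of the Thurston--Bennequin number, and I expect it to be the only nonformal input. An alternative route to the same bound would replace this contact-geometric normalization with the Birman--Menasco composite braid theorem, reducing a maximizing braid representative of $K_1\#K_2$ to a composite braid while keeping track of $w-n$.
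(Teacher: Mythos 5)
Your lower bound is complete and agrees with the paper's (the paper dispatches it as the elementary computation $\mathrm{sl}(T_1\,\#\,T_2)=\mathrm{sl}(T_1)+\mathrm{sl}(T_2)+1$; your braid bookkeeping $(w_1+w_2)-(n_1+n_2-1)$ is a correct implementation of this). The upper bound, however, is where the entire content of the lemma lies, and there your argument has a genuine gap: the claim that a maximizing transverse representative $\mathcal{T}$ of $K_1\,\#\,K_2$ can be split by a convex decomposing sphere so that $\mathrm{sl}(\mathcal{T})=\mathrm{sl}(\mathcal{T}_1)+\mathrm{sl}(\mathcal{T}_2)+1$ is precisely the statement that every transverse representative of a composite knot is a transverse connected sum, and you assert it rather than prove it (``I expect it to be the only nonformal input''). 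The specific missing steps are nontrivial: the sphere must be made convex \emph{relative to} the transverse knot (near the two puncture points one must work in the complement of a standard neighborhood of $\mathcal{T}$, where the boundary curves of the punctured sphere are not Legendrian); Giroux's criterion does give a single dividing circle, but one must further arrange the two intersection points in standard position relative to the dividing set; and after cutting, one needs Eliashberg's uniqueness of the tight ball together with a relative statement for the transverse arc to see that capping produces honest transverse representatives with the claimed additivity of $\mathrm{sl}$. Carrying this out amounts to proving a transverse analogue of the Etnyre--Honda structure theorem, which is a paper-sized input, not a formality.

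The paper avoids exactly this work by reducing to the Legendrian category: given a maximizing transverse $T$, choose a Legendrian $\Lambda$ with $T_+(\Lambda)$ transversely isotopic to $T$, so $\mathrm{sl}(T)=\mathrm{tb}(\Lambda)+\mathrm{rot}(\Lambda)$; then invoke Etnyre--Honda's classification of Legendrian connected sums \cite{etnyre2003connected} to write $\Lambda=\Lambda_1\,\#\,\Lambda_2$, and apply their $\mathrm{tb}$ and $\mathrm{rot}$ sum formulas to get $\mathrm{sl}(T)\leq \mathrm{SL}(K_1)+\mathrm{SL}(K_2)+1$. In other words, the convex-surface normalization you flag as the main obstacle is exactly the step the paper outsources to the literature, in its already-proven Legendrian form. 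Your alternative route via the Birman--Menasco composite braid theorem could also be made to work, but it too needs detail you omit: their theorem reduces a closed braid representing a composite link to a composite braid only after exchange moves, so you must note that exchange moves preserve both the writhe and the braid index (hence $w-n$) and, by Birman--Wrinkle, the transverse isotopy class, before concluding $\mathrm{SL}(K_1\,\#\,K_2)\leq \mathrm{SL}(K_1)+\mathrm{SL}(K_2)+1$ from a maximizing braid. As written, either version of your upper bound leaves the decisive step unproven.
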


\begin{proof}
The inequality $\mathrm{SL}(K_1\, \# \, K_2) \geq \mathrm{SL}(K_1)+ \mathrm{SL}(K_2) + 1$ follows from the elementary calculation that if $T_1, T_2$ are transverse knots, then $\mathrm{sl}(T_1 \, \# \, T_2) = \mathrm{sl}(T_1) + \mathrm{sl}(T_2) + 1$. Thus, we only need to prove $\mathrm{SL}(K_1\, \# \, K_2) \leq \mathrm{SL}(K_1)+ \mathrm{SL}(K_2) + 1$.

Let $T$ be a transverse knot in the topological type $K_1\, \# \, K_2$. It is known \cite[\S 2.9]{etnyre2005surveyknots} that there is a Legendrian knot $\Lambda$ whose standard positive transverse pushoff $T_+(\Lambda)$ is transversely isotopic to $T$. Moreover, $\mathrm{sl}(T) = \mathrm{sl}(T_+(\Lambda)) = \mathrm{tb}(\Lambda) + \mathrm{rot}(\Lambda)$. By Etnyre and Honda's classification of Legendrian connect sums \cite[Theorem 3.4]{etnyre2003connected}, $\Lambda$ is Legendrian isotopic to $\Lambda_{1} \,\#\, \Lambda_2$, where $\Lambda_i$ is a Legendrian representative of $K_i$ for $i\in \{1,2\}$. Note that $T_+(\Lambda_i)$ is a transverse representative of $K_i$ with $\mathrm{sl}(T_+(\Lambda_i)) = \mathrm{tb}(\Lambda_i) + \mathrm{rot}(\Lambda_i)$. Applying the $\mathrm{tb}$ and $\mathrm{rot}$ connected sum formulas of \cite[Lemma 3.3]{etnyre2003connected}, we have
\begin{align*}
    \mathrm{sl}(T) &= \mathrm{tb}(\Lambda_{1} \,\#\, \Lambda_2) + \mathrm{rot}(\Lambda_{1} \,\#\, \Lambda_2) \\
    &= \left(\mathrm{tb}(\Lambda_1) + \mathrm{tb}(\Lambda_2) + 1\right)\, + \, \left(\mathrm{rot}(\Lambda_1) + \mathrm{rot}(\Lambda_2)\right) \\
    &= \left(\mathrm{tb}(\Lambda_1) + \mathrm{rot}(\Lambda_1)\right) \, + \, \left(\mathrm{tb}(\Lambda_2) + \mathrm{rot}(\Lambda_2)\right) + 1 \\
    &= \mathrm{sl}(T_+(\Lambda_1)) + \mathrm{sl}(T_+(\Lambda_2)) + 1 \\
    &\leq \mathrm{SL}(K_1)+ \mathrm{SL}(K_2) + 1.
\end{align*}
Maximizing over all transverse representatives $T$ gives the desired result. 
\end{proof}

We denote the HOMFLYPT polynomial \cite{freyd1985new,przytycki1987invariants} of an oriented link as $P_L(v,z)$, so that it is defined by $P_{\mathrm{unknot}}(v,z) = 1$ and the diagrammatic Skein relation
\[
v^{-1}\, P_{L_+}(v,z) - v\, P_{L_-}(v,z) = z\, P_{L_0}(v,z),
\]
where $L_0$ is obtained from $L$ by performing an orientation-preserving resolution of a single crossing, and $L_{\pm}$ replaces the crossing with a positive (resp.\ negative) crossing. The next lemma also appears to be known to experts (for example, see \cite[\S 3]{acampo2024two}) but we could not locate written details. We supply a proof for completeness. 

\begin{lemma}\label{lemma:homfly-lemma}
If $P_K(v,z)$ denotes the HOMFLYPT polynomial, then $P_K(v, v^{-1}-v) =1$.     
\end{lemma}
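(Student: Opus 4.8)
The claim is that evaluating the HOMFLYPT polynomial at the substitution $z = v^{-1} - v$ yields the constant $1$ for every knot $K$. My plan is to show that the evaluated invariant $Q_K(v) := P_K(v, v^{-1}-v)$ satisfies a \emph{trivialized} Skein relation at this special value, and then to conclude by an induction on the crossing number (or, more robustly, on the number of crossing changes needed to unknot).

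First I would observe what the substitution does to the Skein relation. Setting $z = v^{-1} - v$, the diagrammatic relation becomes
\begin{equation*}
v^{-1}\, Q_{L_+}(v) - v\, Q_{L_-}(v) = (v^{-1} - v)\, Q_{L_0}(v).
\end{equation*}
The crucial point is that the coefficient $z = v^{-1}-v$ now \emph{matches} the two coefficients $v^{-1}$ and $v$ on the left-hand side. This is engineered so that the relation is consistent with $Q_{L_+} = Q_{L_-}$ being forced whenever the resolution $L_0$ behaves predictably. Concretely, if one can show by induction that $Q_L(v) = v^{k-1}$ for every $k$-component unlink (equivalently that $Q$ is insensitive to crossing changes on a knot), then the Skein relation collapses.

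The cleanest route is a two-part induction. The base case is that the unknot has $Q = 1$, which is immediate from $P_{\text{unknot}} = 1$. For the inductive step I would argue that crossing changes do not alter $Q$ \emph{for knots}: given a knot diagram, changing a self-crossing from $L_+$ to $L_-$ produces the oriented resolution $L_0$, which for a knot is a two-component link. I would compute $Q$ on the relevant unlinks first, showing the value at $z = v^{-1}-v$ of a $k$-component unlink is $(v^{-1}-v)^{?}$... here I must be careful: the standard normalization gives the $k$-component unlink HOMFLYPT value $\left(\frac{v^{-1}-v}{z}\right)^{k-1}$, which under $z = v^{-1}-v$ becomes exactly $1$ for \emph{all} $k$. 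This is the key simplification: at this evaluation every unlink, regardless of component count, has $Q = 1$. Feeding this into the Skein relation shows $v^{-1}Q_{L_+} - vQ_{L_-} = (v^{-1}-v)\cdot 1 = v^{-1} - v$ whenever $L_0$ is a connected-then-split configuration, so if inductively $Q_{L_-} = 1$ then $Q_{L_+} = 1$ as well.

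The main obstacle, and where I would spend the most care, is handling the induction correctly across the link/knot boundary: applying the Skein relation to a knot crossing produces a \emph{link} $L_0$, so a naive induction on knots alone is not closed. I would therefore run the induction on all links simultaneously, proving the sharper statement that $Q_L(v) = 1$ for every link $L$ (not just knots), with the unlink computation above as the genuine base case and crossing-change invariance plus the split-union behavior as the inductive engine. The only real content to verify is that every link can be reduced to an unlink by a sequence of crossing changes and oriented resolutions that stays within the inductive hypothesis — a standard fact, but one that must be invoked cleanly rather than waved at. Once that scaffolding is in place, the evaluation $z = v^{-1}-v$ makes each step preserve the value $1$, and the knot case is a special instance.
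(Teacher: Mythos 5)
Your proposal is correct and follows essentially the same route as the paper: specialize the Skein relation at $z = v^{-1}-v$, observe that every unlink evaluates to $1$ (the paper phrases this via $P_{L\sqcup L'} = \tfrac{v^{-1}-v}{z}P_{L\# L'}$, you via the $\bigl(\tfrac{v^{-1}-v}{z}\bigr)^{k-1}$ normalization---the same fact), and run an induction on crossing number over all links, using crossing changes to reach an unlink while the resolutions $L_0$ drop to $n-1$ crossings so the inductive hypothesis applies. The only cosmetic difference is that the paper propagates the quantity $f_D(v)-1$, which a crossing change scales by $v^{\pm 2}$, whereas you propagate the value $1$ directly through each crossing change; these are the same computation.
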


\begin{proof}
Let $f_K(v):=P_K(v, v^{-1}-v)$. The defining Skein relation $v^{-1}\, P_{L_+}(v,z) - v\, P_{L_-}(v,z) = z\, P_{L_0}(v,z)$ gives the Skein relation 
\begin{equation}\label{eq:skein-specialization}
v^{-1}\, f_{L_+}(v) - v\, f_{L_-}(v) = (v^{-1}-v)\, f_{L_0}(v).    
\end{equation}
We will show $f_K(v) = 1$ by induction on the number $n$ of crossings in a diagram. 

For the base case $n=0$ we consider the unlink. As $P_{L \sqcup L'}(v,z) = \tfrac{v^{-1}-v}{z}\, P_{L \#L'}(v,z)$, it follows that $f_{L\sqcup L'}(v) = f_{L\# L'}(v)$ and consequently $f_{\mathrm{unlink}}(v) = 1$. 

Now assume inductively that if $D_{n-1}$ is a diagram with $n-1$ crossings then $f_{D_{n-1}}(v) = 1$. Let $D_n^{\pm}$ be diagrams with $n$ crossings which differ only at one $\pm$-crossing. The Skein relation \eqref{eq:skein-specialization} and the inductive hypothesis then give
\[
v^{-1}\, f_{D_n^+}(v) - v\, f_{D_n^-}(v) = v^{-1} - v.
\]
Rearranging, we see that 
\[
v^{-1}\cdot(f_{D_n^+}(v) - 1) = v\cdot(f_{D_n^-}(v) - 1).
\]
In other words, changing a crossing in a diagram $D_{n}$ with $n$ crossings modifies the quantity $f_{D_n}(v) - 1$ by a factor of $v^{\pm 2}$, depending on the sign of the change. Starting with $D_n$, there is a sequence of $k$ crossing changes which takes $D_n$ to the unlink. Thus, for some $\ve_i\in \{\pm 1\}$, we have  
\begin{align*}
    f_{D_n}(v) - 1 &= v^{2\ve_1}\cdot v^{2\ve_2}\cdots v^{2\ve_k} \cdot (f_{\mathrm{unlink}}(v) - 1) = 0,
\end{align*}
hence $f_{D_n}(v) = 1$. This completes the proof. 
\end{proof}

\begin{proof}[Proof of \cref{thm:KplusKbar}.]
A consequence of the MFW inequality \cite{morton1986seifert,franks1987braids} --- see the discussion after \cite[Question 1.5]{ng2012arc} --- is the upper bound 
\begin{equation*}
\mathrm{SL}(K)+\mathrm{SL}(\overline{K}) \leq -\mathrm{span}_v\, P_K(v,z) - 2
\end{equation*}
where $\mathrm{span}_v$ is the difference between the maximum and minimum $v$-degrees. Following \cref{lemma:sum},
\begin{equation*}
    \mathrm{SL}(K\, \# \, \overline{K}) \, =\,  \mathrm{SL}(K) + \mathrm{SL}(\overline{K}) + 1 \, \leq\,  -\mathrm{span}_v\, P_K(v,z) - 1.
\end{equation*}
Lagrangian sliceness of $K\, \# \, \overline{K}$ then requires $\mathrm{SL}(K\, \# \, \overline{K}) = -1$, which by the above estimate forces $\mathrm{span}_v\, P_K(v,z) = 0$. In other words, we have $P_K(v,z) = v^m\, g(z)$ for some integer $m$ and a Laurent polynomial $g(z)$. \cref{lemma:homfly-lemma} then implies $g(v^{-1}- v) = v^{-m}$. For a fixed $z\in \C$, the solutions of $v^{-1} - v = z$ are $v^*_{\pm} = \tfrac{1}{2}(z \pm \sqrt{z^2 + 4})$. This means that for all $z\in \C$, we have 
\[
(v_+^*)^{-m} = g\left((v_{\pm}^*)^{-1} - v_{\pm}^*\right) = (v_-^*)^{-m}.
\]
This is only possible if $m=0$, hence if $g(v^{-1}-v) = 1$. This forces $g(z) = 1$, so $K$ has trivial HOMFLYPT polynomial.
\end{proof}

\bibliography{references}

@article{abe2013annulus,
  author  = {Abe, T. and Jong, I. D. and Omae, Y. and Takeuchi, M.},
  title   = {Annulus twist and diffeomorphic 4-manifolds},
  journal = {Math. Proc. Cambridge Philos. Soc.},
  volume  = {155},
  number  = {2},
  pages   = {219--235},
  year    = {2013}
}

@article{acampo2024two,
  author  = {A'Campo, L. and Baader, S. and Ferretti, L. and Ryffel, L.},
  title   = {Two strand twisting},
  journal = {New York J. Math.},
  volume  = {30},
  pages   = {774--782},
  year    = {2024}
}

@article{abe2022ribbon,
  author  = {Abe, Tetsuya and Tange, Motoo},
  title   = {Ribbon disks with the same exterior},
  journal = {Comm. Anal. Geom.},
  volume  = {30},
  number  = {2},
  pages   = {257--269},
  year    = {2022}
}

@incollection{auckly1993surgery,
  author    = {Auckly, D.},
  title     = {Surgery numbers of 3-manifolds: a hyperbolic example},
  booktitle = {Geometric Topology},
  series    = {AMS/IP Stud. Adv. Math.},
  volume    = {2},
  pages     = {21--34},
  publisher = {AMS},
  address   = {Providence, RI},
  year      = {1993}
}

@article{boileau2001quasipositive,
title = {Quasi-positivité d'une courbe analytique dans une boule pseudo-convexe},
journal = {C. R. Acad. Sci. Paris},
volume = {332},
number = {9},
pages = {825--830},
year = {2001},
issn = {0764-4442},
doi = {https://doi.org/10.1016/S0764-4442(01)01945-0},
url = {https://www.sciencedirect.com/science/article/pii/S0764444201019450},
author = {Michel Boileau and Stepan Orevkov},
}

@article{cappell1976complement,
 ISSN = {0003486X, 19398980},
 URL = {http://www.jstor.org/stable/1970942},
 author = {Sylvain E. Cappell and Julius L. Shaneson},
 journal = {Ann. of Math.},
 number = {3},
 pages = {349--353},
 publisher = {[Annals of Mathematics, Trustees of Princeton University on Behalf of the Annals of Mathematics, Mathematics Department, Princeton University]},
 title = {There Exist Inequivalent Knots With the Same Complement},
 volume = {103},
 year = {1976}
}

@article{casals2022infinitely,
author = {Roger Casals and Honghao Gao},
title = {{Infinitely many Lagrangian fillings}},
volume = {195},
journal = {Ann. of Math.},
number = {1},
publisher = {Department of Mathematics of Princeton University},
pages = {207--249},
keywords = {cluster structures, Lagrangian fillings, Legendrian knots, microlocal sheaves, ping-pong Lemma},
year = {2022},
doi = {10.4007/annals.2022.195.1.3},
URL = {https://doi.org/10.4007/annals.2022.195.1.3}
}

@article{casals2024steintrace,
author = {Roger Casals and John B. Etnyre and Mark Kegel},
title = {{Stein traces and characterizing slopes}},
volume = {389},
journal = {Math. Ann.},
pages = {1053--1098},
year = {2024},
doi = {10.1007/s00208-023-02662-2},
URL = {https://doi.org/10.1007/s00208-023-02662-2}
}

@article{chantraine2010concordance,
author = {Chantraine, Baptiste},
title = {{Lagrangian concordance of Legendrian knots}},
volume = {10},
journal = {Algebr. Geom. Topol.},
number = {1},
publisher = {MSP},
pages = {63--85},
keywords = {cobordism, Legendrian, local Thom conjecture, symplectisation},
year = {2010},
doi = {10.2140/agt.2010.10.63},
URL = {https://doi.org/10.2140/agt.2010.10.63}
}

@article{conway2021symplectic,
    author = {Conway, James and Etnyre, John B. and Tosun, Bülent},
    title = "{Symplectic Fillings, Contact Surgeries, and Lagrangian Disks}",
    journal = {Int. Math. Res. Not. IMRN},
    volume = {2021},
    number = {8},
    pages = {6020--6050},
    year = {2019},
    month = {02},
    issn = {1073-7928},
    doi = {10.1093/imrn/rny291},
    url = {https://doi.org/10.1093/imrn/rny291},
    eprint = {https://academic.oup.com/imrn/article-pdf/2021/8/6020/37304158/rny291.pdf},
}

@article{cornwell2016obstructions,
  author  = {Cornwell, Christopher and Ng, Lenhard and Sivek, Steven},
  title   = {Obstructions to {L}agrangian concordance},
  journal = {Algebr. Geom. Topol.},
  volume  = {16},
  number  = {2},
  pages   = {797--824},
  year    = {2016},
  doi     = {10.2140/agt.2016.16.797}
}

@article{ding2009handle,
   title={Handle moves in contact surgery diagrams},
   volume={2},
   ISSN={1753-8416},
   url={http://dx.doi.org/10.1112/jtopol/jtp002},
   DOI={10.1112/jtopol/jtp002},
   number={1},
   journal={J. Topol.},
   publisher={Wiley},
   author={Ding, Fan and Geiges, Hansj{\"o}rg},
   year={2009},
   pages={105--122} 
}

@article{ekholm2012exactcobordisms,
author = {Ekholm, Tobias and Honda, Ko and Kálmán, Tamás},
year = {2012},
month = {12},
pages = {2627--2689},
title = {Legendrian knots and exact {L}agrangian cobordisms},
volume = {18},
journal = {J. Eur. Math. Soc. (JEMS)},
doi = {10.4171/JEMS/650}
}

@article{ekholm2016nonloose,
  author  = {Ekholm, Tobias},
  title   = {Non-loose {L}egendrian spheres with trivial contact homology {DGA}},
  journal = {J. Topol.},
  volume  = {9},
  number  = {3},
  pages   = {826--848},
  year    = {2016}
}

@Inbook{eliashberg1995pushoff,
author="Eliashberg, Yakov",
title={Topology of 2-knots in $\mathbb{R}^4$ and symplectic geometry},
bookTitle="The Floer Memorial Volume",
year="1995",
publisher="Birkh{\"a}user Basel",
address="Basel",
pages="335--353",
abstract="We show in this paper that there exists a deep relationship between the differential topology of S2-knots in ℝ4 and their symplectic geometry. In particular, we use symplectic tools to define a real-valued topological invariant of a knotted S2 in ℝ4 (see Section 3.4 below). Here are the main results which motivate this definition.",
isbn="978-3-0348-9217-9",
doi="10.1007/978-3-0348-9217-9_15",
url="https://doi.org/10.1007/978-3-0348-9217-9_15"
}

@article{eliashberg2018flexiblelagrangians,
    author = {Eliashberg, Yakov and Ganatra, Sheel and Lazarev, Oleg},
    title = "{Flexible Lagrangians}",
    journal = {Int. Math. Res. Not. IMRN},
    volume = {2020},
    number = {8},
    pages = {2408--2435},
    year = {2018},
    month = {05},
    issn = {1073-7928},
    doi = {10.1093/imrn/rny078},
    url = {https://doi.org/10.1093/imrn/rny078},
}

@article{etnyre2001knotscontact,
author = {Etnyre, John B. and Honda, Ko},
title = {{Knots and Contact Geometry I: Torus Knots and the Figure Eight Knot}},
volume = {1},
journal = {J. Symplectic Geom.},
number = {1},
publisher = {International Press of Boston},
pages = {63--120},
year = {2001},
}

@article{etnyre2003connected,
  author  = {Etnyre, John B. and Honda, Ko},
  title   = {On connected sums and {L}egendrian knots},
  journal = {Adv. Math.},
  volume  = {179},
  number  = {1},
  pages   = {59--74},
  year    = {2003}
}

@incollection{etnyre2005surveyknots,
    author = {Etnyre, John B.},
    title = {Legendrian and transversal knots},
    booktitle = {{Handbook of Knot Theory}},
    editor = {Menasco, William and Thistlewaite, Morwen},
    publisher = {Elsevier B.V.},
    year = {2005},
    isbn = {0444514520},
    type = {Survey article},
    pages = {105--186}
}

@article{etnyre2008contact,
  author  = {Etnyre, John B.},
  title   = {On contact surgery},
  journal = {Proc. Amer. Math. Soc.},
  volume  = {136},
  pages   = {3355--3362},
  year    = {2008},
  doi     = {10.1090/S0002-9939-08-09278-2}
}

@misc{etnyre2023contactsurgerynumbers,
      title={Contact surgery numbers}, 
      author={John B. Etnyre and Marc Kegel and Sinem Onaran},
      year={2023},
        note={arXiv:2201.00157},
      eprint={2201.00157},
      archivePrefix={arXiv},
      primaryClass={math.GT},
      url={https://arxiv.org/abs/2201.00157}, 
}

@misc{etnyre2025cosmetic,
  author        = {Etnyre, John B. and Shah, Tanushree},
  title         = {On contact cosmetic surgery},
  year          = {2024},
  note          = {arXiv:2411.02201v2},
}

@article{franks1987braids,
  author  = {Franks, John and Williams, R. F.},
  title   = {Braids and the {J}ones polynomial},
  journal = {Trans. Amer. Math. Soc.},
  volume  = {303},
  number  = {1},
  pages   = {97--108},
  year    = {1987}
}

@article{freyd1985new,
  author  = {Freyd, P. and Yetter, D. and Hoste, J. and Lickorish, W. B. R. and Millett, K. and Ocneanu, A.},
  title   = {A new polynomial invariant of knots and links},
  journal = {Bull. Amer. Math. Soc. (N.S.)},
  volume  = {12},
  number  = {2},
  pages   = {239--246},
  year    = {1985}
}

@article{gluck1961embedding,
author = {Herman Gluck},
title = {{The embedding of two-spheres in the four-sphere}},
volume = {67},
journal = {Bull. Amer. Math. Soc.},
number = {6},
publisher = {American Mathematical Society},
pages = {586--589},
year = {1961},
}

@article{gompf1998handlebody,
  author  = {Gompf, Robert E.},
  title   = {Handlebody construction of {S}tein surfaces},
  journal = {Ann. of Math.},
  volume  = {148},
  number  = {2},
  pages   = {619--693},
  year    = {1998}
}

@article{gordon1976knots,
  author  = {Gordon, Cameron McA.},
  title   = {Knots in the {$4$}-sphere},
  journal = {Comment. Math. Helv.},
  volume  = {51},
  number  = {1},
  pages   = {585--596},
  year    = {1976}
}

@article{gordon1989complement,
 ISSN = {08940347, 10886834},
 URL = {http://www.jstor.org/stable/1990979},
 author = {Cameron McA. Gordon and John Luecke},
 journal = {J. Amer. Math. Soc.},
 number = {2},
 pages = {371--415},
 publisher = {American Mathematical Society},
 title = {Knots are Determined by Their Complements},
 volume = {2},
 year = {1989}
}

@inproceedings{gordon1991dehn,
  author    = {Gordon, Cameron McA.},
  title     = {Dehn surgery on knots},
  booktitle = {Proceedings of the International Congress of Mathematicians},
  volume    = {I, II},
  pages     = {631--642},
  year      = {1991},
  address   = {Kyoto, 1990}
}

@misc{hayden2021exotically,
  author        = {Hayden, Kyle},
  title         = {Exotically knotted disks and complex curves},
  year          = {2021},
  note          = {arXiv:2003.13681v2},
}

@article{hitt1981many,
  author  = {Hitt, L. R. and Summers, D. W.},
  title   = {Many different disk knots with the same exterior},
  journal = {Comment. Math. Helv.},
  volume  = {56},
  number  = {1},
  pages   = {142--147},
  year    = {1981}
}

@article{kegel2018legendrian,
  author  = {Kegel, Marc},
  title   = {The {L}egendrian knot complement problem},
  journal = {J. Knot Theory Ramifications},
  volume  = {27},
  number  = {14},
  pages   = {1850067},
  year    = {2018}
}

@misc{kegel2025share,
  author        = {Marc Kegel and Lisa Piccirillo},
  title         = {Knots that share four surgeries},
  year          = {2025},
  note          = {arXiv:2505.13168v2},
}

@article{kronheimer2007monopoles,
  author  = {Kronheimer, P. and Mrowka, T. and Ozsv{\'a}th, P. and Szab{\'o}, Z.},
  title   = {Monopoles and lens space surgeries},
  journal = {Ann. of Math.},
  volume  = {165},
  pages   = {457--546},
  year    = {2007}
}

@article{lackenby2019every,
  author  = {Lackenby, Marc},
  title   = {Every knot has characterising slopes},
  journal = {Math. Ann.},
  volume  = {374},
  pages   = {429--446},
  year    = {2019}
}

@article{lashof1969codimensiontwo,
author = {Richard K. Lashof and Julius L. Shaneson},
title = {{Classification of knots in codimension two}},
volume = {75},
journal = {Bull. Amer. Math. Soc.},
number = {1},
publisher = {American Mathematical Society},
pages = {171--175},
year = {1969},
}

@article{li2021smoothly,
  author  = {Li, Youlin and Tange, Motoo},
  title   = {Smoothly non-isotopic {L}agrangian disk fillings of {L}egendrian knots},
  journal = {Geom. Dedicata},
  volume  = {213},
  pages   = {211--225},
  year    = {2021}
}

@article{meier2022generalized,
  author  = {Meier, Jeffrey and Zupan, Alexander},
  title   = {Generalized square knots and homotopy 4-spheres},
  journal = {J. Differential Geom.},
  volume  = {122},
  number  = {1},
  pages   = {69--129},
  year    = {2022}
}

@misc{meier2023knots,
  author        = {Meier, Jeffrey and Zupan, Alexander},
  title         = {Knots bounding non-isotopic ribbon disks},
  year          = {2023},
  note          = {arXiv:2310.17564},
}

@article{morton1986seifert,
  author  = {Morton, H. R.},
  title   = {Seifert circles and knot polynomials},
  journal = {Math. Proc. Cambridge Philos. Soc.},
  volume  = {99},
  number  = {1},
  pages   = {107--109},
  year    = {1986},
  doi     = {10.1017/S0305004100063982}
}

@article{neumann1985volumes,
  author  = {Neumann, Walter D. and Zagier, Don},
  title   = {Volumes of hyperbolic three-manifolds},
  journal = {Topology},
  volume  = {24},
  number  = {3},
  pages   = {307--332},
  year    = {1985}
}

@article{ng2012arc,
  author  = {Ng, Lenhard},
  title   = {On arc index and maximal {T}hurston--{B}ennequin number},
  journal = {J. Knot Theory Ramifications},
  volume  = {21},
  number  = {4},
  pages   = {1250031},
  year    = {2012}
}

@article{osoinach2006manifolds,
  author  = {Osoinach, John},
  title   = {Manifolds obtained by surgery on an infinite number of knots in {$S^3$}},
  journal = {Topology},
  volume  = {45},
  pages   = {725--733},
  year    = {2006}
}

@article{piccirillo2019snake,
  author  = {Piccirillo, Lisa},
  title   = {Snake genus and slice genus},
  journal = {Geom. Topol.},
  volume  = {23},
  pages   = {2665--2684},
  year    = {2019}
}

@article{plotnick1983infinitely,
  author  = {Plotnick, Steven P.},
  title   = {Infinitely many disk knots with the same exterior},
  journal = {Math. Proc. Cambridge Philos. Soc.},
  volume  = {93},
  number  = {1},
  pages   = {67--72},
  year    = {1983}
}

@article{przytycki1987invariants,
  author  = {Przytycki, J. H. and Traczyk, P.},
  title   = {Invariants of links of {C}onway type},
  journal = {Kobe J. Math.},
  volume  = {4},
  pages   = {115--139},
  year    = {1987}
}

@article{rudolph1983seifertribbons,
  title={Braided surfaces and {S}eifert ribbons for closed braids},
  author={Rudolph, Lee},
  journal={Comment. Math. Helvetici},
  volume={58},
  pages={1--37},
  year={1983}
}

@article{rudolph1993quasipositivity,
  title={Quasipositivity as an obstruction to sliceness},
  author={Rudolph, Lee},
  journal={Bull. {A}mer. {M}ath. {S}oc.},
  volume={29},
  number={1},
  pages={51--59},
  year={1993}
}

@article{suciu1985infinitely,
  author  = {Suciu, Alexander I.},
  title   = {Infinitely many ribbon knots with the same fundamental group},
  journal = {Math. Proc. Cambridge Philos. Soc.},
  volume  = {98},
  number  = {3},
  pages   = {481--492},
  year    = {1985}
}

@article{takioka2019classification,
  author  = {Takioka, Hideo},
  title   = {Classification of {A}be--{T}ange's ribbon knots},
  journal = {Topol. Appl.},
  volume  = {264},
  pages   = {27--50},
  year    = {2019}
}

@book{thurston1980geometry,
  author    = {Thurston, William P.},
  title     = {The Geometry and Topology of 3-Manifolds},
  year      = {1980},
  note      = {Princeton University lecture notes. Published as AMS Collected Works vol. 27 (2022)}, 
  publisher = {AMS}
}

@article{torisu2003additivity,
  author  = {Torisu, Ichiro},
  title   = {On the additivity of the {T}hurston--{B}ennequin invariant of {L}egendrian knots},
  journal = {Pacific J. Math.},
  volume  = {210},
  number  = {2},
  pages   = {359--371},
  year    = {2003}
}

@misc{snappy,
  author       = {Culler, M. and Dunfield, N. and Goerner, M. and Weeks, J.},
  title        = {{SnapPy}, a computer program for studying the geometry and topology of 3-manifolds},
  note = {Available at \url{http://snappy.computop.org} and \url{https://sagecell.sagemath.org/}},
  year         = {2024}
}
\bibliographystyle{amsalpha}

\end{document}